\documentclass[sn-basic,Numbered]{sn-jnl}

\usepackage{mathtools} 
\usepackage{graphicx}%
\usepackage{multirow}%
\usepackage{amsmath,amssymb,amsfonts}%
\usepackage{amsthm}%
\usepackage{mathrsfs}%
\usepackage[title]{appendix}%
\usepackage{xcolor}%
\usepackage{textcomp}%
\usepackage{booktabs}%
\usepackage{xurl}
\usepackage{algorithm}%
\usepackage{algorithmicx}%
\usepackage{algpseudocode}%
\usepackage{enumitem}

\newcommand{\SOL}{\mathrm{SOL}}
\newcommand{\VI}{\mathrm{VI}}

\theoremstyle{thmstyleone}%
\newtheorem{assumption}{Assumption}
\newtheorem{definition}{Definition}%
\newtheorem{theorem}{Theorem}%
\newtheorem{remark}{Remark}%
\newtheorem{example}{Example}%

\begin{document}
\title{Complete Trajectory Tracking for Polynomial Differential Variational Inequalities: A Moment-SOS Based Structure-Aware Approach}

\author[1]{\fnm{Zhang} \sur{Huang}}\email{huangzhang16@cdut.edu.cn}

\author*[1]{\fnm{Wei} \sur{Li}}\email{liwei@cdut.edu.cn}

\author[2]{\fnm{Jie} \sur{Wang}}\email{wangjie212@amss.ac.cn}

\affil*[1]{\orgdiv{School of Mathematical Sciences}, \orgname{Chengdu University of Technology}, \orgaddress{\city{Chengdu}, \state{Sichuan}, \postcode{610059}, \country{P.R. China}}}
\affil[2]{\orgdiv{State Key Laboratory of Mathematical Sciences, Academy of Mathematics and Systems Science}, \orgname{Chinese Academy of Sciences}, \orgaddress{\city{Beijing}, \postcode{100190}, \country{P.R. China}}}

\abstract{This paper investigates Polynomial Differential Variational Inequalities (PDVIs), a class of non-smooth dynamical systems in which an ordinary differential equation is coupled with a time-dependent variational inequality (VI) defined by polynomials. As a foundation for polynomial differential variational inequalities, we investigate the parametric polynomial variational inequality and establish that, under mild assumptions, its parameter space admits a stratification into finitely many disjoint semi-algebraic regions, each corresponding to a qualitatively distinct solution structure.
Furthermore, we establish the existence of maximal regular solutions to the PDVI for almost every initial condition in the parametric feasible set, where each solution has finitely many switches and evolves according to continuous semi-algebraic selection laws between switching times.
Leveraging the structured partition of the parameter space, in which region boundaries correspond to discontinuities in the solution map of the parametric VI, we propose the structure-aware \texttt{SOS-PDVIRK4} algorithm to handle solution multiplicity and structural transitions. This method integrates a fourth-order Runge--Kutta integrator with the Moment--SOS hierarchy and features a novel hybrid strategy that is explicitly guided by the underlying parameter space geometry. 
The efficacy of our structure-aware framework is demonstrated through numerical experiments on a constrained cyber-physical control system. Our numerical experiments demonstrate that, from identical initial states but different initial controls, trajectories can diverge significantly, revealing the discontinuous structure of the solution map of the parametric polynomial VI. The experiments also include numerical convergence studies and highlight the potential of our algorithm to explore the long-term behavior of PDVI systems.
}

\keywords{Polynomial differential variational inequalities, Semi-algebraic structure, Moment--SOS hierarchy, Nonconvex constraint sets}

\pacs[MSC Classification]{49J40, 34A60, 90C22, 65L06}

\maketitle

\section{Introduction}\label{sec:introduction}
Hybrid dynamical systems characterized by the interplay of continuous evolution and discrete transitions arise ubiquitously in power markets, robotic manipulation, and cellular regulation. Yet, despite decades of research, a \emph{unified mathematical framework} that intrinsically captures state-dependent switching without resorting to ad-hoc logical rules remains elusive. Existing approaches often treat continuity and discreteness as separate layers, leading to models that are either analytically intractable or lack structural guarantees on the nature of hybrid complexity.
The foundational work of Pang and Stewart~\cite{Pang2008} established differential variational inequalities (DVIs) as a powerful paradigm for modeling such coupled dynamics, with broad applications spanning mechanical contact~\cite{Stewart2008}, dynamic traffic assignment~\cite{Friesz2010}, electrical circuits~\cite{Acary2008}, and economic dynamics~\cite{Li2010}. A cornerstone of their well-posedness theory, and indeed nearly all subsequent extensions, is the reliance on two key structural assumptions: (i) the constraint set $ K \subset \mathbb{R}^m $ is nonempty, closed, and \emph{convex}, and (ii) the VI operator $ F(x,\cdot) $ is \emph{monotone} or \emph{coercive}. These assumptions jointly guarantee that the solution set $ \mathrm{SOL}(K, F(x(t),\cdot)) $ is a singleton at each time $ t $ , rendering the right-hand side of the ODE single-valued and amenable to classical existence and uniqueness theorems. This uniqueness also underpins the convergence and robustness of standard numerical schemes~\cite{Chen2013,Han2010,Zhang2023,Zhang2023penalty,Huang2026}.
However, this persistent reliance on convexity significantly limits the applicability of the DVI framework to numerous real-world systems governed by inherently \emph{nonconvex} physical or policy-induced constraints, such as annular control domains, disjoint strategy spaces, or obstacle-laden safety regions. In such nonconvex settings, the classical uniqueness guarantee fails, and multiple equilibria may coexist. This multiplicity poses a fundamental challenge: local solvers, lacking global information about the equilibrium manifold, typically return an arbitrary solution dictated by their initialization or internal heuristics. Consequently, they offer no guarantee of reproducibility, controllability, or alignment with a prescribed policy (e.g., smoothness, optimality, or stability). Mere existence and continuity of solutions are thus insufficient for reliable prediction or control; what is needed is a framework that not only acknowledges this multiplicity but actively harnesses it by enumerating all equilibria at each state and enabling principled selection among them.This challenge of managing solution multiplicity in dynamical contexts has also motivated alternative computational frameworks based on measure-valued relaxations. Notably, Henrion and collaborators have developed powerful occupation-measure-based approaches that reformulate optimal control and safety verification for switched and hybrid polynomial systems as infinite-dimensional linear programs, which are then approximated via the Lasserre Moment--SOS (sum-of-squares) hierarchy~\cite{Claeys2016,Henrion2013switching}. While these methods excel at global verification and optimization over ensembles of trajectories, they typically sidestep the explicit construction of individual Carath\'{e}odory solutions with guaranteed regularity.

While nonconvex variational inequalities have long been recognized as challenging due to potential non-uniqueness and the breakdown of projection-based methods~\cite{FacchineiPang}, a recent breakthrough has emerged in the special yet expressive class of \emph{polynomial variational inequalities} (PVIs)~\cite{Gowda2017,Huang2019,Qi2019}. The theoretical foundation for understanding the structure of PVI solution sets was laid by Hieu~\cite{Hieu2020}, who demonstrated that the solution map of a PVI is a semi-algebraic set. This profound insight implies that the solution set possesses a rich, finite geometric structure that it can be partitioned into finitely many smooth manifolds (strata), and for generic parameters, the number of solutions is finite~\cite{Hieu2019,Huong2016}. Furthermore, explicit error bounds for regularized gap functions have been established, providing crucial analytical tools for dynamical analysis~\cite{Dinh2021}.
Building upon this structural understanding, Nie and his collaborators have pioneered a powerful \emph{computational} framework for PVIs that leverages their inherent algebraic nature. Their approach, based on Lagrange multiplier expressions (LME) and the Lasserre Moment--SOS hierarchy~\cite{Nie2019,Nie2023book,Nie2025}, transforms the nonconvex PVI into a sequence of semidefinite programs. Under generic conditions, this method can either compute all isolated solutions to the PVI or rigorously certify its nonexistence within finitely many steps~\cite{nie2013finite}. This is achieved by exploiting the fact that, generically, all solutions of a PVI are Karush–Kuhn–Tucker (KKT) points of a related polynomial optimization problem, and the Moment-SOS machinery can effectively enumerate these points. This global, certifiable methodology stands in stark contrast to traditional local solvers like PATH~\cite{Dirkse1995,Ferris1999}, which provide no completeness guarantees.

It is precisely this insight that \emph{polynomial structure}, rather than convexity, can serve as a viable and powerful foundation for the global analysis and computation of nonconvex equilibrium systems that motivates our work. By embedding the polynomial VI machinery into a dynamical setting, we introduce \emph{Polynomial Differential Variational Inequalities} (PDVIs), where the time-evolving state 
$x(t)$ serves as a parameter for an underlying nonconvex, polynomially defined equilibrium constraint at each instant.
In this work, we propose PDVIs as a canonical foundation for hybrid systems. By coupling polynomial vector fields with variational inequalities over semi-algebraic sets, PDVIs naturally encode multi-valued solution mappings whose structure is governed by the geometry of algebraic boundaries. Crucially, leveraging tools from real algebraic geometry particularly Hardt’s triviality theorem we establish that the parameter space admits a finite semi-algebraic stratification, wherein each stratum corresponds to a fixed topological configuration of solutions, and transitions across strata induce only \emph{finitely many, explicitly classifiable switching events}. This finite choice set at boundaries transcends the classical continuous/discrete dichotomy: smooth dynamics and logical jumps emerge not as imposed constructs, but as inherent consequences of a single geometric object.

This framework is not merely a novel construct, but a unifying generalization that subsumes several fundamental classes of dynamical and optimization systems, including linear complementarity problems, Nash equilibria in polynomial games, and contact dynamics under Coulomb friction. Most importantly, the polynomial assumption is not a limitation but a \emph{structural enabler} which it ensures that hybrid complexity remains finite, verifiable, and amenable to symbolic-numeric computation. To address the computational challenge, we propose the \texttt{SOS-PDVIRK4} algorithm, which synergistically combines a fourth-order Runge-Kutta integrator with an inner solver for the parametric VI based on the Lasserre Moment-SOS hierarchy, featuring a robust hybrid strategy for solution recovery across the entire parameter space. In contrast to relaxation-based paradigms that operate on ensembles of trajectories~\cite{Claeys2016,Magron2019}, our PDVI framework provides a \emph{constructive} and \emph{trajectory-wise} theory grounded in the semi-algebraic geometry of the equilibrium manifold itself.

The main contributions of this paper are summarized as follows:
\begin{enumerate}
    \item \textbf{Structure-Aware Theoretical Foundation for PDVIs:} We establish that the parameter space of the underlying polynomial variational inequality admits a semi-algebraic partition, where each region corresponds to a distinct, qualitatively stable solution structure (e.g., finite multiplicity). Based on this, we prove the existence of \textbf{regular solutions} to the PDVI for almost every initial condition in the feasible set.

    \item \textbf{Structure-Guided Certified Algorithm:} We propose the \texttt{SOS-PDVIRK4} algorithm, a structure-aware computational framework that integrates a fourth-order Runge--Kutta integrator with the Moment--SOS hierarchy. Its hybrid VI solver leverages a partition of the parameter space, applying tailored solution strategies within regions of consistent solution structure and certified global SOS relaxations near structural boundaries, thereby enabling accurate and efficient tracking and discovery of all admissible trajectories.

    \item \textbf{Revelation of Complex Multi-Trajectory Dynamics:} Through numerical experiments on a constrained cyber-physical system, we demonstrate the algorithm's capability to uncover rich dynamical phenomena including coexisting solution branches, symmetry-breaking bifurcations, and the long-term behavior of the PDVI under diverse initial states and initial control inputs that are entirely inaccessible to conventional local solvers.
\end{enumerate}

The remainder of this paper is organized as follows. Section~\ref{sec:preliminaries} provides essential background on variational inequalities, semi-algebraic geometry, and the Moment-SOS hierarchy. Section~\ref{sec:theory} analyzes parametric polynomial variational inequalities and establishes that their parameter space admits a semi-algebraic partition according to solution structure. Section~\ref{Existence} builds on this foundation to prove the existence of regular solutions for PDVIs for almost every feasible initial condition. Building directly on these structural insights, Section~\ref{sec:algorithm} introduces the structure-aware \texttt{SOS-PDVIRK4} algorithm, which integrates Runge–Kutta time-stepping with certified global VI solvers guided by the parameter-space geometry. Section~\ref{sec:numerics} presents numerical experiments that validate the framework’s ability to uncover complex multi-trajectory dynamics and bifurcations. Finally, Section~\ref{sec:conclusion} concludes the paper with a discussion of limitations and future research directions.

\section{Preliminaries}\label{sec:preliminaries}
In this section, we recall fundamental concepts and results that underpin our analysis of polynomial differential variational inequalities (PDVIs). We start by formally defining the central object of study.

\begin{definition}
\label{def:pdvi}
Let $T > 0$ be a fixed time horizon and $x_0 \in \mathbb{R}^n$ an initial state. Consider the system over $t \in [0, T]$ given by
\begin{subequations}\label{eq:pdvi-system}
\begin{align}
    \dot{x}(t) &= f\big(x(t), u(t)\big), \label{eq:pdvi-dyn} \\
    \langle F(x(t), u(t)),\, z - u(t) \rangle &\geq 0 \quad \text{for all } z \in K, \label{eq:pdvi-vi} \\
    x(0) &= x_0. \label{eq:pdvi-init}
\end{align}
\end{subequations}
This system is called a \emph{Polynomial Differential Variational Inequality} (PDVI) if the following structural conditions are satisfied: the vector field $f : \mathbb{R}^n \times \mathbb{R}^m \to \mathbb{R}^n$ and the variational inequality mapping $F : \mathbb{R}^n \times \mathbb{R}^m \to \mathbb{R}^m$ are polynomial mappings, i.e., $f \in \mathbb{R}[x, u]^n$ and $F \in \mathbb{R}[x, u]^m$, and the constraint set $K \subset \mathbb{R}^m$ is a basic closed semi-algebraic set of the form
\[
    K = \left\{ u \in \mathbb{R}^m \;\middle|\; g_i(u) \geq 0,\ i = 1, \dots, p \right\},
\]
where $p \in \mathbb{N}$ and each $g_i \in \mathbb{R}[u]$ is a real polynomial.

A pair of functions $(x(\cdot), u(\cdot))$ is said to be a \emph{Carath\'{e}odory solution} of the PDVI if $x : [0, T] \to \mathbb{R}^n$ is absolutely continuous, $u : [0, T] \to \mathbb{R}^m$ is Lebesgue measurable and essentially bounded, and the relations~\eqref{eq:pdvi-dyn}--\eqref{eq:pdvi-init} hold for almost every $t \in [0, T]$.
\end{definition}


\begin{definition}[Semi-algebraic set \cite{Bochnak1998,Coste2002}]
A subset $S \subseteq \mathbb{R}^n$ is called \emph{semi-algebraic} if it can be written as a finite union of sets of the form
\[
    \left\{ x \in \mathbb{R}^n : p_i(x) = 0,\ q_j(x) 
 \geq 0,\ i=1,\dots,r,\ j=1,\dots,s \right\},
\]
where each $p_i, q_j \in \mathbb{R}[x_1,\dots,x_n]$ is a real polynomial.
\end{definition}

Semi-algebraic sets are closed under finite unions, finite intersections, complements, and Cartesian products. Crucially, they are also closed under projection which is a result known as the Tarski--Seidenberg theorem\cite{Bochnak1998}.
A profound consequence of semi-algebraic sets is that parameterized families of such sets can be decomposed into finitely many topologically identical pieces. This is formalized by Hardt's Triviality Theorem\cite{hardt1980semi}.

To solve the polynomial variational inequality problem $\VI(K, F)$, where both the constraint set $K$ and the mapping $F$ are defined by polynomials, this paper adopts the Lagrange Multiplier Expression (LME) approach. The validity of this method relies on the following definitions.

\begin{definition}[LICQ \cite{NocedalWright2006}]
At $u \in K$, let $I(u) = \{i \in I : g_i(u) = 0\}$ denote the active inequality indices. The Linear Independence Constraint Qualification (LICQ) holds at $u$ if the gradients $\{\nabla g_i(u) : i \in E \cup I(u)\}$ are linearly independent.
\end{definition}

\begin{definition}
\label{def:nonsingular}
The constraint tuple $g = (g_1,\dots,g_m)$ defining
\[
    K = \{ x \in \mathbb{R}^n : g_i(x) \geq 0,\ i \in I;\ g_i(x) = 0,\ i \in E \}
\]
is \emph{nonsingular} if LICQ holds at every $x \in \mathbb{C}^n$.
\end{definition}

\begin{definition}
Let $K \subseteq \mathbb{R}^n$ be a basic closed semi-algebraic set defined by
\[
K = \{ x \in \mathbb{R}^n : g_1(x) \geq 0, \dots, g_m(x) \geq 0 \},
\]
where each $g_j \in \mathbb{R}[x]$ is a polynomial.  
The set $K$ is said to satisfy the \emph{Archimedean condition} if there exist a real number $R > 0$ and sums-of-squares (SOS) polynomials $\sigma_0, \sigma_1, \dots, \sigma_m \in \Sigma[x]$ such that
\[
R - \|x\|^2 = \sigma_0(x) + \sum_{j=1}^m \sigma_j(x) \, g_j(x),
\]
where $\|x\|^2 = x_1^2 + \cdots + x_n^2$.
\end{definition}

\begin{remark}
If $K$ is \emph{compact} (i.e., bounded and closed), then one can always add the redundant ball constraint $R - \|x\|^2 \geq 0$ for sufficiently large $R > 0$, thereby ensuring that the Archimedean condition holds. Consequently, every compact basic semi-algebraic set admits an equivalent description that satisfies the Archimedean condition.

This property is crucial for the convergence of the Lasserre Moment--SOS hierarchy and underpins Nie's Lagrange Multiplier Expression (LME) method for solving polynomial variational inequality (PVI) problems. In particular, when $K$ is compact and the Linear Independence Constraint Qualification (LICQ) holds at all relevant points, the Archimedean condition guarantees that the moment relaxation sequence converges and that the LME-based polynomial reformulation of the KKT system is valid and complete.
\end{remark}

Under the aforementioned regularity assumptions, namely, the Linear Independence Constraint Qualification (LICQ), nonsingularity of the constraint tuple $g$, and the Archimedean condition , the \emph{Lagrange Multiplier Expression} (LME) method proposed by Nie provides an algebraic framework for solving polynomial variational inequality problems $\VI(K, F)$.

Specifically, when the constraint tuple $g$ is nonsingular, there exists a real polynomial matrix $L(x) \in \mathbb{R}^{(m+p) \times n}[x]$ such that the Lagrange multipliers $(\lambda(x), \mu(x))$ at any KKT point satisfy
\[
\begin{bmatrix}
\lambda(x) \\ \mu(x)
\end{bmatrix}
= L(x) F(x).
\]
This explicit polynomial representation eliminates the multipliers as implicit variables and transforms the complementarity conditions $\lambda_i(x) g_i(x) = 0$ into purely polynomial constraints in $x$. Consequently, the solution set $\SOL(K, F)$ is embedded in the semi-algebraic set
\[
\mathcal{K} := \left\{ x \in \mathbb{R}^n :
\begin{aligned}
& F(x) - \sum_{i \in I} \lambda_i(x) \nabla g_i(x) - \sum_{j \in E} \mu_j(x) \nabla g_j(x) = 0, \\
& g_j(x) = 0,\ j \in E, \\
& \lambda_i(x) \geq 0,\ g_i(x) \geq 0,\ \lambda_i(x) g_i(x) = 0,\ i \in I
\end{aligned}
\right\},
\]
where $\lambda(x)$ and $\mu(x)$ are substituted via the LME.

Thanks to the Archimedean condition, the set $\mathcal{K}$ is compact and admits a Putinar-type Positivstellensatz certificate. This enables the application of the Lasserre Moment--SOS hierarchy~\cite{Lasserre2001} to globally optimize over $\mathcal{K}$ or enumerate its points. Moreover, when $F$ and the constraint polynomials $\{g_i\}$ are generic, the set $\mathcal{K}$ is finite and coincides exactly with $\SOL(K, F)$; in this case, the Moment--SOS hierarchy terminates in finitely many steps with exact recovery of all solutions~\cite{Nie2019}.

In summary, the LME method provides a systematic and globally convergent framework for solving polynomial variational inequalities. It algebraizes the KKT system into a purely polynomial formulation through the Lagrange multiplier expression, ensures the feasible set is well-behaved for Moment--SOS relaxations via the Archimedean condition, and guarantees finite convergence of the hierarchy under genericity assumptions because the solution set $\mathcal{K}$ becomes finite, allowing exact recovery at some finite relaxation order.

\section{Parametric Polynomial Variational Inequalities} \label{sec:theory}
This section develops the theoretical foundation for Polynomial Differential Variational Inequalities by analyzing their static counterpart, the Parametric Polynomial Variational Inequality(PPVI). 
Using tools from real algebraic geometry, we show that the solution mapping is semi-algebraic under polynomial data, which implies strong geometric regularity. In particular, the parameter space admits a finite stratification such that within each stratum the solution set has constant cardinality, while structural changes occur only across lower-dimensional boundaries. These properties are essential for designing robust and certified algorithms for Polynomial Differential Variational Inequalities.

\subsection{Problem Assumptions} 
To facilitate the analysis of generic finiteness and structural stability of solutions in the polynomial setting, we make the following assumptions.

\begin{assumption} \label{ass:core}
We assume that the following conditions hold for the parametric differential variational inequality (PDVI) specified in Definition~\ref{def:pdvi}.
\begin{enumerate}[label=\textup{(A\arabic*)}, widest=(A3), left=0pt, align=left]
    \item The control constraint set $K$ is nonempty, bounded, closed, and basic semi-algebraic.

    \item The linear independence constraint qualification (LICQ) holds at every point in $K$.

    \item The mappings $F \colon \mathbb{R}^n \times K \to \mathbb{R}^m$ and $f \colon \mathbb{R}^n \times K \to \mathbb{R}^n$ are polynomial in their arguments.
\end{enumerate}
\end{assumption}

Under Assumption~\ref{ass:core}, it follows from genericity results for semi-algebraic variational inequalities~\cite{Hieu2020,Lee2018} that for almost every parameter $x \in \mathbb{R}^n$, the variational inequality $\mathrm{VI}(F(x,\cdot), K)$ has finitely many solutions, and each solution is strongly regular, i.e., the reduced Jacobian is nonsingular.

For expositional clarity and without loss of essential generality, we will often work with the canonical representation
\[ F(x, u) = G(u) - x, \]
where $G : K \to \mathbb{R}^n$ is a polynomial mapping. This form arises naturally in many applications including affine parametric linear complementarity problems and polynomial Nash equilibrium problems via invertible linear reparameterizations of the original parameter vector.

\subsection{Semi-Algebraic Structure and Finiteness of Solution Sets}
For a fixed state $x \in \mathbb{R}^n$, we denote the set of admissible controls (the solution set of the static VI) by
\[ \mathcal{S}(x) := \left\{ u \in K \;\middle|\; \langle F(x, u), z - u \rangle \geq 0,\, \forall z \in K \right\}. \]
The graph of this set-valued mapping is
\[ \mathrm{Graph}(\mathcal{S}) := \left\{ (x, u) \in \mathbb{R}^{n} \times K \;\middle|\; u \in \mathcal{S}(x) \right\}. \]

\begin{theorem} \label{thm:eq-semi-graph}
Under Assumption~\ref{ass:core}, the graph $\mathrm{Graph}(\mathcal{S})$ is a semi-algebraic subset of $\mathbb{R}^{n+m}$.
\end{theorem}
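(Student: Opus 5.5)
The plan is to write $\mathrm{Graph}(\mathcal{S})$ as a first-order formula over the reals with polynomial data, and then conclude by the Tarski--Seidenberg theorem (closure of semi-algebraic sets under projection and complement), as recalled in Section~\ref{sec:preliminaries}. By definition,
\[
\mathrm{Graph}(\mathcal{S}) = \bigl\{ (x,u) \in \mathbb{R}^n\times\mathbb{R}^m : u \in K,\ \forall z\in\mathbb{R}^m\ \bigl(z \in K \Rightarrow \langle F(x,u), z-u\rangle \ge 0\bigr) \bigr\}.
\]
Under (A1) and (A3), $K=\{g_i\ge 0\}$ is basic semi-algebraic and $(x,u,z)\mapsto \langle F(x,u),z-u\rangle$ is a polynomial. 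So every atomic condition is a polynomial sign condition, and the only quantifier is a universal one over $z$.

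The steps are as follows. First, set $A := \{(x,u,z) : z\in K,\ \langle F(x,u), z-u\rangle < 0\}\subseteq\mathbb{R}^{n+2m}$. This set is semi-algebraic, because it is a finite intersection of sets defined by polynomial inequalities; the strict inequality is the complement of a closed semi-algebraic set. Second, let $\pi(x,u,z)=(x,u)$. By Tarski--Seidenberg, $\pi(A)$ is semi-algebraic. It is exactly the set of pairs $(x,u)$ for which some $z\in K$ violates the VI inequality. Third, we obtain
\[
\mathrm{Graph}(\mathcal{S}) = (\mathbb{R}^n\times K)\setminus \pi(A),
\]
which is semi-algebraic by closure under complements, Cartesian products and intersections.

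The argument uses only (A1) and (A3), namely the polynomial data and the semi-algebraic description of $K$; LICQ (A2) is not needed. There is no real obstacle. The only point requiring care is the universal quantifier, which is handled by the identity $\forall z\,\Phi \equiv \neg\exists z\,\neg\Phi$ before projecting. One could alternatively use LICQ to replace the VI by its KKT system, with multipliers eliminated existentially. That route would need LICQ to make KKT necessary and would still fail to give sufficiency under nonconvexity, so the direct quantifier-elimination route above is the one to take.
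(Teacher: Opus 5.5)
Your proposal is correct and takes the same route as the paper. Both write the graph as the first-order formula $u\in K \wedge \forall z\,(z\in K \Rightarrow \langle F(x,u),z-u\rangle\ge 0)$ with polynomial atoms and conclude by Tarski--Seidenberg. You make the quantifier elimination explicit through the identity $\mathrm{Graph}(\mathcal{S}) = (\mathbb{R}^n\times K)\setminus\pi(A)$, whereas the paper simply appeals to quantifier elimination, and you are right that LICQ (A2) plays no role.
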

\begin{proof}
By definition, the graph of the solution mapping is
\[ \mathrm{Graph}(\mathcal{S}) = \left\{ (x, u) \in \mathbb{R}^n \times \mathbb{R}^m \;\middle|\; u \in K \text{ and } \langle F(x, u), z - u \rangle \geq 0 \text{ for all } z \in K \right\}. \]
Under Assumption~\ref{ass:core}, the mapping $F : \mathbb{R}^n \times \mathbb{R}^m \to \mathbb{R}^m$ is polynomial in $(x, u)$, and the feasible set $K \subseteq \mathbb{R}^m$ is basic semi-algebraic, i.e.,
\[ K = \{ u \in \mathbb{R}^m : g_i(u) \geq 0,\ i=1,\dots,p \}, \]
with each $g_i \in \mathbb{R}[u]$. Since $F$ is polynomial in $(x,u)$, the function $(x,u,z) \mapsto \langle F(x, u), z - u \rangle$ is a polynomial in $(x,u,z)$. Therefore, the condition
\[ \forall z \in K,\quad \langle F(x, u), z - u \rangle \geq 0 \]
can be expressed as the first-order formula over the real closed field $\mathbb{R}$,
\[ \bigwedge_{i=1}^p g_i(u) \geq 0 \quad \land \quad \forall z \left( \left( \bigwedge_{i=1}^p g_i(z) \geq 0 \right) \implies P(x, u, z) \geq 0 \right), \]
where $P(x, u, z) := \langle F(x, u), z - u \rangle \in \mathbb{R}[x, u, z]$. This is a first-order definable set in the language of ordered rings. By the Tarski–Seidenberg theorem (quantifier elimination for real closed fields) \cite{Bochnak1998}, any such definable subset of $\mathbb{R}^{n+m}$ is semi-algebraic. Hence, $\mathrm{Graph}(\mathcal{S})$ is a semi-algebraic subset of $\mathbb{R}^{n+m}$.
\end{proof}

\begin{theorem}
\label{thm:generic_finiteness}
Let $\mathcal{D} := \{ x \in \mathbb{R}^n \mid \mathcal{S}(x) \neq \emptyset \}$ be the parameter feasible set. Under Assumption~\ref{ass:core}, the following properties hold.
\begin{enumerate}
    \item There exists a Lebesgue null set $\mathcal{N} \subset \mathcal{D}$ such that for all $x \in \mathcal{D} \setminus \mathcal{N}$, the solution set $\mathcal{S}(x)$ is finite.
    \item On $\mathcal{D} \setminus \mathcal{N}$, the cardinality function $x \mapsto \#\mathcal{S}(x)$ is constant on each connected component of $\mathcal{D} \setminus \mathcal{N}$.
\end{enumerate}
\end{theorem}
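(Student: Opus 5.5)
The plan is to derive both parts from the semi-algebraicity of $\mathrm{Graph}(\mathcal{S})$ in Theorem~\ref{thm:eq-semi-graph}, together with Hardt's triviality theorem and the dimension theory of semi-algebraic sets. First, $\mathcal{D}$ is the projection of $\mathrm{Graph}(\mathcal{S})$ onto the $x$-coordinates, so it is semi-algebraic by Tarski--Seidenberg. Next, apply Hardt's theorem to the projection $\pi:\mathrm{Graph}(\mathcal{S})\to\mathbb{R}^n$, $(x,u)\mapsto x$, whose restriction is continuous and semi-algebraic. This gives a finite partition $\mathcal{D}=C_1\cup\dots\cup C_r$ into semi-algebraic sets such that $\pi$ is semi-algebraically trivial over each $C_j$. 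Concretely, $\pi^{-1}(C_j)$ is semi-algebraically homeomorphic to $C_j\times \Phi_j$ over $C_j$ for some semi-algebraic fiber $\Phi_j$. In particular, all fibers $\mathcal{S}(x)$ with $x\in C_j$ are homeomorphic, and $\#\mathcal{S}(x)$ is constant on $C_j$; it may be infinite.

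For part~1, I will show that the set $B:=\{x\in\mathcal{D}: \mathcal{S}(x) \text{ is infinite}\}$ has dimension $<n$. Then $\mathcal{N}$ will be taken to contain $B$. The route is through the generic regularity statement recalled after Assumption~\ref{ass:core}, namely \cite{Hieu2020,Lee2018}: for almost every $x$, every solution is strongly regular and hence isolated. By compactness of $K$ and closedness of $\mathcal{S}(x)$ (a closed fiber of a closed graph, since $F$ is continuous and $K$ is closed), an isolated-points compact set is finite. Thus $B$ is a null semi-algebraic set. A semi-algebraic set is Lebesgue-null if and only if its dimension is $<n$, so $\dim B<n$. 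This step is the main obstacle. The canonical form $F(x,u)=G(u)-x$ is what makes it work. Near a nondegenerate KKT configuration with a fixed active set $I$, the relevant map $(u,\lambda)\mapsto \big(G(u)-\sum_{i\in I}\lambda_i\nabla g_i(u),\,g_I(u)\big)$ is a polynomial map. The "bad" parameters are then the critical values of this map, and LICQ from (A2) together with semi-algebraic Sard shows they have dimension $<n$. I would cite this rather than redo it. Should only the generic-regularity citation be available, I would rely on it directly.

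For part~2, set $\mathcal{N}:=B\cup\bigcup\{\overline{C_j}\setminus \operatorname{int} C_j\}\cup\bigcup_{\dim C_j<n}C_j$. In words, $\mathcal{N}$ collects the infinite-fiber parameters, the boundaries of all Hardt cells, and the lower-dimensional cells. Each of these is semi-algebraic of dimension $<n$: boundaries of semi-algebraic sets drop dimension. Hence $\mathcal{N}$ is Lebesgue-null. Then $\mathcal{D}\setminus\mathcal{N}$ is a disjoint union of the open sets $\operatorname{int}C_j$, with $\dim C_j=n$, after removing the closed null pieces. Each connected component of $\mathcal{D}\setminus\mathcal{N}$ is therefore contained in a single $\operatorname{int}C_j$: distinct cell interiors are disjoint open sets, so a connected set meeting two of them would be disconnected. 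On that cell $\#\mathcal{S}$ is constant by triviality, and it is finite because $B\subset\mathcal{N}$. This proves part~2.

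A technical point to check is that Hardt's theorem is applied to a possibly non-closed semi-algebraic set. This causes no difficulty, since the theorem only requires a continuous semi-algebraic map on a semi-algebraic set. One must also ensure the partition can be refined so that the cells are disjoint and their boundaries are controlled. For this I would pass to a cell decomposition of $\mathbb{R}^n$ compatible with the $C_j$, whose open $n$-cells are connected, open, and have a null complement in $\mathcal{D}$.
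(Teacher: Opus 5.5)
Your proof is correct, and it differs from the paper's in the step that matters. The difference is the canonical form $F(x,u)=G(u)-x$, which you use and the paper's proof does not.

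\textbf{Part 1.} Both proofs start the same way: Hardt's theorem for $\pi:\mathrm{Graph}(\mathcal{S})\to\mathcal{D}$, and constancy of $\#\mathcal{S}$ on each stratum. For finiteness, the paper uses a pure dimension count. An infinite fiber over a top-dimensional stratum $\mathcal{D}_i$ would force $\dim\pi^{-1}(\mathcal{D}_i)>\dim\mathcal{D}$. The paper says this contradicts $\dim\mathrm{Graph}(\mathcal{S})=\dim\mathcal{D}$, ``since $\mathcal{D}$ is a projection of the graph.'' But a projection only gives $\dim\mathcal{D}\le\dim\mathrm{Graph}(\mathcal{S})$. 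Under (A1)--(A3) alone the statement is in fact false: $F\equiv 0$ with $K$ the closed unit disk satisfies all three assumptions, yet $\mathcal{S}(x)=K$ for every $x$.

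Your KKT route supplies exactly the missing bound. Take $F=G(u)-x$ with LICQ on $K$, and let $M_I=\{g_I=0,\ \nabla g_I \text{ of full rank}\}$. Every point of $\mathrm{Graph}(\mathcal{S})$ with active set $I$ is then the image of a point of the $m$-dimensional manifold $M_I\times\mathbb{R}^{|I|}$ under $(u,\lambda)\mapsto\bigl(G(u)-\sum_{i\in I}\lambda_i\nabla g_i(u),\,u\bigr)$. Hence $\dim\mathrm{Graph}(\mathcal{S})\le m=n$, which is precisely what the paper's count needs. Sard then sharpens this to your critical-value statement.

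Two consequences for how you write it up:
\begin{itemize}
\item State the canonical form as a genuine hypothesis, not a convenience.
\item Do not fall back on the paper's paraphrase of the genericity results ``under Assumption 1''; the same counterexample refutes it in that generality.
\end{itemize}

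One technical fix concerns the Sard step. Apply semi-algebraic Sard to $\Phi_I(u,\lambda)=G(u)-\sum_{i\in I}\lambda_i\nabla g_i(u)$ restricted to $M_I\times\mathbb{R}^{|I|}$. Do not apply it to the augmented map $(u,\lambda)\mapsto(\Phi_I(u,\lambda),g_I(u))$ on $\mathbb{R}^{m+|I|}$. Sard for the augmented map only shows its critical values have dimension $<m+|I|$, which says nothing about the slice $\mathbb{R}^m\times\{0\}$. Nothing is lost by restricting: LICQ shows the two maps have the same critical points on $\{g_I=0\}$.

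\textbf{Part 2.} Your version is also more careful than the paper's. The paper takes $\mathcal{N}$ to be the union of the lower-dimensional strata. It then asserts that each connected component of the remainder lies in a single stratum. Hardt strata need not be open, so this is unjustified as written. Your removal of $\overline{C_j}\setminus\operatorname{int}C_j$ closes that gap; intersect with $\mathcal{D}$ so that $\mathcal{N}\subset\mathcal{D}$. This makes $\mathcal{D}\setminus\mathcal{N}$ a disjoint union of open sets, so the component argument goes through.

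\textbf{Comparison.} If its dimension claim held, the paper's approach would be independent of how $x$ enters $F$. Yours gives a valid proof, at the cost of a structural hypothesis on the $x$-dependence of $F$ that the statement genuinely requires.
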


\begin{proof}
By Theorem~\ref{thm:eq-semi-graph}, the graph $\mathrm{Graph}(\mathcal{S}) \subset \mathbb{R}^{n+m}$ is a semi-algebraic set. The domain $\mathcal{D}$ is precisely the projection of $\mathrm{Graph}(\mathcal{S})$ onto the $x$-coordinates, i.e.,
\[
\mathcal{D} = \pi_x\bigl(\mathrm{Graph}(\mathcal{S})\bigr),
\]
where $\pi_x(x,u) = x$. By the Tarski--Seidenberg theorem, $\mathcal{D}$ is also a semi-algebraic subset of $\mathbb{R}^n$.

Consider the restriction of the projection map to the graph:
\[
\pi : \mathrm{Graph}(\mathcal{S}) \to \mathcal{D}, \quad \pi(x, u) = x.
\]
This is a semi-algebraic map between semi-algebraic sets. By Hardt’s Triviality Theorem (see, e.g., \cite{hardt1980semi}), there exists a finite semi-algebraic stratification $\{\mathcal{D}_i\}_{i=1}^k$ of $\mathcal{D}$ such that $\pi$ is trivial over each stratum $\mathcal{D}_i$; that is, for each $i$, there exists a semi-algebraic set $F_i$ and a semi-algebraic homeomorphism
\[
h_i : \mathcal{D}_i \times F_i \to \pi^{-1}(\mathcal{D}_i)
\]
satisfying $\pi \circ h_i(x, f) = x$ for all $(x, f) \in \mathcal{D}_i \times F_i$.

Let $\mathcal{R} \subset \mathcal{D}$ denote the union of all strata $\mathcal{D}_i$ having dimension equal to $\dim(\mathcal{D})$, and define the exceptional set $\mathcal{N} := \mathcal{D} \setminus \mathcal{R}$. Since $\mathcal{N}$ is a finite union of semi-algebraic sets of dimension strictly less than $\dim(\mathcal{D}) \leq n$, it follows that $\mathcal{N}$ has Lebesgue measure zero in $\mathbb{R}^n$.

Now fix a full-dimensional stratum $\mathcal{D}_i \subseteq \mathcal{R}$. For any $x \in \mathcal{D}_i$, the fiber $\pi^{-1}(x) = \mathcal{S}(x)$ is semi-algebraically homeomorphic to $F_i$. Suppose, for contradiction, that $F_i$ is infinite. Then $F_i$ contains a semi-algebraic curve, implying $\dim(F_i) \geq 1$. Consequently,
\[
\dim\bigl(\pi^{-1}(\mathcal{D}_i)\bigr) = \dim(\mathcal{D}_i) + \dim(F_i) > \dim(\mathcal{D}),
\]
which contradicts the fact that $\pi^{-1}(\mathcal{D}_i) \subseteq \mathrm{Graph}(\mathcal{S})$ and $\dim(\mathrm{Graph}(\mathcal{S})) = \dim(\mathcal{D})$ (since $\mathcal{D}$ is the image of $\mathrm{Graph}(\mathcal{S})$ under a projection). Therefore, each $F_i$ must be a finite set.

It follows that for every $x \in \mathcal{D} \setminus \mathcal{N} = \mathcal{R}$, the solution set $\mathcal{S}(x)$ is finite, proving part (1).

Moreover, since all fibers over a fixed stratum $\mathcal{D}_i$ are homeomorphic to the same finite set $F_i$, their cardinalities are equal. As each connected component of $\mathcal{D} \setminus \mathcal{N}$ is contained in a single stratum $\mathcal{D}_i$, the cardinality function $x \mapsto \#\mathcal{S}(x)$ is constant on each such connected component, establishing part (2).
\end{proof}

Building upon the finiteness result, we recall an explicit upper bound on the topological complexity of the solution sets.

\begin{theorem}[\cite{Hieu2019}]\label{thm:connected_components_bound}
Consider the polynomial variational inequality $\mathrm{VI}(F(x,\cdot), K)$ where the constraint set $K$ is a basic closed semi-algebraic set defined by $p$ polynomial inequalities, and the mapping $F$ is polynomial of degree at most $d$. Let $n$ be the dimension of the parameter space and $m$ the dimension of the control space. Then, the number of connected components of the solution set $\mathcal{S}(x)$ is bounded from above by a constant that depends only on $n$, $m$, $p$, and $d$. Specifically, there exists a finite integer $C(n, m, p, d)$ such that for all $x \in \mathcal{D}$,
\[ 
\#\{\text{connected components of } \mathcal{S}(x)\} \leq C(n, m, p, d). 
\]
\end{theorem}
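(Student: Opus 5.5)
The plan is to deduce the bound from the Milnor--Thom/Basu--Pollack--Roy theory of Betti numbers for semi-algebraic sets of bounded complexity. The reduction rests on one observation: for each fixed $x$, the set $\mathcal{S}(x)$ is the fiber over $x$ of $\mathrm{Graph}(\mathcal{S})$. By Theorem~\ref{thm:eq-semi-graph}, this graph is defined by a first-order formula built from a number of polynomials controlled by $p$, and of degree controlled by $d$ and the degrees of the $g_i$. I would absorb $\deg g_i$ into $d$, taking $d$ to bound all data degrees.

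The steps, in order:

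\begin{enumerate}
\item \textbf{Fiberwise formula.} Write $\mathcal{S}(x)$ as the set of $u \in \mathbb{R}^m$ satisfying
\[
\bigwedge_{i} g_i(u)\ge 0 \;\wedge\; \forall z\,\Bigl(\bigwedge_i g_i(z)\ge 0 \Rightarrow P(x,u,z)\ge 0\Bigr).
\]
For fixed $x$, this involves $p+1$ polynomials in the $2m$ variables $(u,z)$, each of degree at most $d+1$. Only $x$ varies, and it enters as a coefficient parameter.

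\item \textbf{Quantifier elimination with bounds.} Apply the effective quantifier elimination of Basu--Pollack--Roy (block elimination). It yields a quantifier-free formula in $u$ whose polynomials number at most $s(m,p,d)$ and have degree at most $D(m,p,d)$. Both bounds are independent of the coefficients, hence independent of $x$.

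An alternative that avoids elimination is to use the LICQ in (A2). Under it, $\mathcal{S}(x)$ is contained in the KKT set with multipliers, which is defined by polynomial equalities and inequalities in $(u,\lambda)$ of degree at most $d+1$. The drawback is that the VI is nonconvex, so $\mathcal{S}(x)$ may be a proper subset of the KKT set. For that reason I prefer the first-order route.

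\item \textbf{Betti number bound.} Apply the bound on the number of connected components (zeroth Betti number) of a semi-algebraic set defined by $s$ polynomials of degree at most $D$ in $m$ variables. The bound is $(O(sD))^{m}$, which gives $C(n,m,p,d)$.

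\item \textbf{Uniformity in $x$.} Uniformity follows automatically, since the bound depends only on the format of the formula and not on its coefficients. The dependence on $n$ is in fact unnecessary; it appears only if one bounds the whole graph and uses Hardt triviality.
\end{enumerate}

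An even simpler uniformity argument uses Hardt's theorem, as in the proof of Theorem~\ref{thm:generic_finiteness}. The projection $\pi$ has only finitely many fiber types $F_1,\dots,F_k$, so $C:=\max_i b_0(F_i)$ is finite. This yields existence of the constant immediately, although the constant then depends on the specific $F$ rather than only on $(n,m,p,d)$.

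\textbf{Main obstacle.} The hardest step is Step 2, where the universal quantifier over $z$ must be eliminated while keeping a degree and count bound that depends only on $(m,p,d)$. I would cite the singly-exponential quantifier elimination theorem rather than re-prove it. To obtain the stated dependence on $(n,m,p,d)$ alone, I would also make the data generic: treat the coefficients of $F$ as additional parameters $c$, and apply the Hardt argument to the universal family over $(x,c)$. That family is determined entirely by $(n,m,p,d)$, together with a bound on $\deg g_i$ that I absorb into $d$, so its finitely many fiber types give a uniform $C(n,m,p,d)$.
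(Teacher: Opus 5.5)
Your argument is correct. It is also a genuinely different route from the paper's, because the paper gives no proof of this statement and simply imports it from \cite{Hieu2019}.

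\textbf{Comparison of routes.} You work directly with the first-order description of $\mathcal{S}(x)$, and that is the right choice. For nonconvex $K$, such as the annulus of Example~\ref{ex:nonlinear}, a KKT point of $z\mapsto\langle F(x,u),z\rangle$ over $K$ need not be a VI solution. So any bound obtained from a multiplier/KKT description would only control a superset of $\mathcal{S}(x)$, as you note.

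Either of your two uniformity mechanisms works on its own:
\begin{itemize}
\item Effective quantifier elimination followed by the sign-condition component bound gives an explicit, if very crude, constant. The Basu--Pollack--Roy bounds on the number and degree of output polynomials depend only on the format, never on the coefficients. So no extra ``genericity'' step is needed on this route.
\item Hardt triviality over the universal family of coefficient vectors gives existence of $C$ with less machinery, but no explicit value.
\end{itemize}
Because $F(x,\cdot)$ is itself a polynomial map in $u$ of degree at most $d$, the parameter $x$ can be absorbed into the coefficients. This confirms your remark that $C$ need not depend on $n$.

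Citing the literature buys brevity and, presumably, a sharper explicit constant. Your route makes the hypotheses visible, which matters here (see below). One small slip: as polynomials in $(u,z)$ the formula involves $2p+1$ polynomials ($g_i(u)$, $g_i(z)$ and $P$), not $p+1$. This is harmless.

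\textbf{The degree of the $g_i$ must enter the bound.} You treat absorbing $\deg g_i$ into $d$ as a convenience, but it is a necessary correction. As literally stated, with only $\deg F\le d$ bounded, the claim is false. Take $n=m=p=1$, $F(x,u)=x$ and $g_1(u)=-\prod_{k=1}^{2N}(u-k)$. Then:
\begin{itemize}
\item $K=\bigcup_{j=1}^{N}[2j-1,2j]$ is compact with nonempty interior;
\item LICQ holds because all roots of $g_1$ are simple;
\item $\mathcal{S}(0)=K$ has $N$ connected components, for every $N$.
\end{itemize}
So the constant must depend on a bound for $\deg g_i$, exactly as in your formulation.
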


This uniform bound provides a theoretical guarantee on the topological complexity of the solution set. We now establish the existence of selections from the solution mapping, which is crucial for defining trajectories.

\begin{theorem} \label{thm:selection}
Under Assumption~\ref{ass:core}, let $\mathcal{S}: \mathcal{D} \rightrightarrows K$ be the solution mapping of the parameterized polynomial variational inequality, where $\mathcal{D} := \{x \in \mathbb{R}^n \mid \mathcal{S}(x) \neq \emptyset\}$ and $K \subset \mathbb{R}^m$ is compact. Then
\begin{enumerate}
    \item There exists a Lebesgue null set $\mathcal{N} \subset \mathcal{D}$ such that for any $x_0 \in \mathcal{D} \setminus \mathcal{N}$ and any $u_0 \in \mathcal{S}(x_0)$, there exists an open neighborhood $U \subset \mathcal{D} \setminus \mathcal{N}$ of $x_0$ and a continuous (indeed, semi-algebraic) function $\sigma: U \to K$ satisfying $\sigma(x_0) = u_0$ and $\sigma(x) \in \mathcal{S}(x)$ for all $x \in U$.
    \item There exists a Borel measurable function $\sigma: \mathcal{D} \to K$ such that $\sigma(x) \in \mathcal{S}(x)$ for all $x \in \mathcal{D}$.
\end{enumerate}
\end{theorem}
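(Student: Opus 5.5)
The plan is to reuse the stratification built in the proof of Theorem~\ref{thm:generic_finiteness}. Take the same Hardt trivialization $\{\mathcal{D}_i\}_{i=1}^k$ of $\pi:\mathrm{Graph}(\mathcal{S})\to\mathcal{D}$, and define $\mathcal{N}$ exactly as there, namely as the union of the strata of dimension $<\dim\mathcal{D}$. One caveat matters for part (1). The neighborhood $U$ must be open in $\mathbb{R}^n$, so I will enlarge $\mathcal{N}$ to $\mathcal{N}' := \mathcal{D}\setminus \operatorname{int}_{\mathbb{R}^n}(\mathcal{R})$. If $\dim\mathcal{D}<n$, then $\mathcal{D}$ itself is null and statement (1) is vacuous with $\mathcal{N}=\mathcal{D}$. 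If $\dim\mathcal{D}=n$, then $\mathcal{R}\setminus\operatorname{int}(\mathcal{R})$ is a semi-algebraic set with empty interior, hence of dimension $<n$ and null. Also, every full-dimensional stratum $\mathcal{D}_i$ is a semi-algebraic cell of dimension $n$, hence open in $\mathbb{R}^n$. The strata are pairwise disjoint, so $\operatorname{int}(\mathcal{R})$ is the disjoint union of these open cells, possibly up to a further lower-dimensional piece that I also throw into $\mathcal{N}'$.

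For part (1), fix $x_0\in\mathcal{D}\setminus\mathcal{N}'$. Then $x_0$ lies in some open full-dimensional stratum $\mathcal{D}_i$, and I take $U$ to be a small open ball in $\mathcal{D}_i$ around $x_0$. By the proof of Theorem~\ref{thm:generic_finiteness}, the fiber $F_i$ is finite, say $F_i=\{f_1,\dots,f_r\}$. The trivialization $h_i:\mathcal{D}_i\times F_i\to\pi^{-1}(\mathcal{D}_i)$ is a semi-algebraic homeomorphism commuting with projection. Each map $x\mapsto h_i(x,f_j)$ is therefore a continuous semi-algebraic section of $\pi$ over $\mathcal{D}_i$, so $\sigma_j(x):=\pi_u(h_i(x,f_j))$ is continuous and semi-algebraic with $\sigma_j(x)\in\mathcal{S}(x)$. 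Since $h_i(x_0,\cdot)$ is a bijection onto $\{x_0\}\times\mathcal{S}(x_0)$, some $j$ satisfies $\sigma_j(x_0)=u_0$, and I take $\sigma=\sigma_j|_U$. The only delicate point is the openness and measure-zero bookkeeping above; the rest is a direct read-off from Hardt's theorem.

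For part (2), I avoid Kuratowski--Ryll-Nardzewski, which needs closed values, and use the definable choice (curve selection/semi-algebraic selection) theorem for semi-algebraic sets \cite{Bochnak1998,Coste2002}. Since $\mathrm{Graph}(\mathcal{S})$ is semi-algebraic by Theorem~\ref{thm:eq-semi-graph}, there is a semi-algebraic map $\sigma:\mathcal{D}\to\mathbb{R}^m$ with $(x,\sigma(x))\in\mathrm{Graph}(\mathcal{S})$ for all $x\in\mathcal{D}$. If I prefer a self-contained argument, I can build it stratum by stratum. On each $\mathcal{D}_i$, fix some $f\in F_i$ and set $\sigma(x)=\pi_u(h_i(x,f))$. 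This works even for infinite $F_i$, because $F_i\neq\emptyset$ whenever $\mathcal{D}_i\subset\mathcal{D}$. The resulting $\sigma$ is continuous on each of finitely many semi-algebraic (hence Borel) pieces partitioning $\mathcal{D}$, so it is Borel measurable, and it takes values in $K$ because $\mathcal{S}(x)\subset K$. I expect the main obstacle to be the openness claim in part (1), namely ensuring that $U$ is open in $\mathbb{R}^n$ and not merely relatively open in $\mathcal{D}$; the fallback is the enlarged null set $\mathcal{N}'$ described above.
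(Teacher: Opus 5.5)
Your proof is correct, and part (1) follows the paper's argument: a Hardt trivialization of $\pi:\mathrm{Graph}(\mathcal{S})\to\mathcal{D}$, $\mathcal{N}$ the union of lower-dimensional strata, and the section $x\mapsto\pi_u(h_i(x,f_0))$ through $u_0$. Your extra bookkeeping fixes a real gap in the paper. The paper simply declares $U:=\mathcal{D}_i$ an open neighborhood of $x_0$, but Hardt strata need not be open, and if $\dim\mathcal{D}<n$ no nonempty stratum is open in $\mathbb{R}^n$. Your choice $\mathcal{N}=\mathcal{D}$ in that case is right.

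The enlarged null set needs to be stated precisely, though. The set $\mathcal{D}\setminus\operatorname{int}(\mathcal{R})$ as you wrote it is too small: a point of $\operatorname{int}(\mathcal{R})$ on the common boundary of two full-dimensional strata has no neighborhood inside a single stratum. There are two clean fixes. You can first refine the partition to the cells of a cylindrical decomposition; triviality restricts to subsets, and $n$-dimensional cells are open. Alternatively, discard $\mathcal{D}\setminus\bigcup_i\operatorname{int}(\mathcal{D}_i)$, which is null because each $\mathcal{D}_i\setminus\operatorname{int}(\mathcal{D}_i)$ has empty interior.

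Also drop the appeal to finiteness of $F_i$. Nothing in your argument uses it, since the section through any $f_0\in F_i$ works. Moreover, the step in the proof of Theorem~\ref{thm:generic_finiteness} that produces it, $\dim\mathrm{Graph}(\mathcal{S})=\dim\mathcal{D}$ ``because $\mathcal{D}$ is a projection'', fails in general. For example, $F\equiv 0$ satisfies Assumption~\ref{ass:core} and gives $\mathcal{S}(x)=K$ for all $x$.

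For part (2) your route differs from the paper's. The paper's main argument is a descriptive-set-theoretic uniformization theorem (Borel graph with nonempty compact sections), with semi-algebraic selection mentioned only as an aside. You make the semi-algebraic route primary and even self-contained: you choose one $f\in F_i$ per stratum and set $\sigma=\pi_u\circ h_i(\cdot,f)$ on $\mathcal{D}_i$. This is more elementary, needs neither compactness of $K$ nor any selection theorem, and gives a selection that is semi-algebraic and continuous on each of finitely many strata. In exchange, the paper's route does not use definability and would work for any continuous $F$ over compact $K$.

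Your stated reason for avoiding Kuratowski--Ryll-Nardzewski is mistaken, though harmless. $\mathrm{Graph}(\mathcal{S})$ is closed and $K$ is compact, so $\mathcal{S}$ is upper semicontinuous with nonempty compact values on the closed set $\mathcal{D}$, and that theorem would apply directly.
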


\begin{proof}
By Theorem~\ref{thm:eq-semi-graph}, the graph $\mathrm{Graph}(\mathcal{S}) \subset \mathbb{R}^{n+m}$ is a semi-algebraic set. Since $K$ is compact and defined by polynomial inequalities, each fiber $\mathcal{S}(x) = \{u \in K \mid (x,u) \in \mathrm{Graph}(\mathcal{S})\}$ is a compact (hence closed and bounded) semi-algebraic subset of $K$, and nonempty precisely when $x \in \mathcal{D}$.

\smallskip
\noindent\textbf{(1) Local semi-algebraic selections.}
Consider the projection map $\pi : \mathrm{Graph}(\mathcal{S}) \to \mathcal{D}$, $\pi(x,u) = x$. This is a semi-algebraic map between semi-algebraic sets. By Hardt’s Triviality Theorem \cite{hardt1980semi}, there exists a finite semi-algebraic stratification $\{\mathcal{D}_i\}_{i=1}^k$ of $\mathcal{D}$ such that $\pi$ is trivial over each stratum $\mathcal{D}_i$; i.e., for each $i$, there exists a semi-algebraic set $F_i$ and a semi-algebraic homeomorphism
\[
h_i : \mathcal{D}_i \times F_i \to \pi^{-1}(\mathcal{D}_i)
\]
with $\pi \circ h_i(x,f) = x$ for all $(x,f) \in \mathcal{D}_i \times F_i$.

Let $\mathcal{R} \subset \mathcal{D}$ be the union of all strata of dimension $\dim(\mathcal{D})$, and set $\mathcal{N} := \mathcal{D} \setminus \mathcal{R}$. Then $\mathcal{N}$ is a finite union of lower-dimensional semi-algebraic sets and thus has Lebesgue measure zero. For any $x_0 \in \mathcal{R}$, there exists a unique full-dimensional stratum $\mathcal{D}_i$ containing $x_0$. The fiber $\mathcal{S}(x_0)$ is semi-algebraically homeomorphic to $F_i$, which is finite by Theorem~\ref{thm:generic_finiteness}. Given any $u_0 \in \mathcal{S}(x_0)$, let $f_0 \in F_i$ be the corresponding element under this homeomorphism. Define
\[
\sigma(x) := h_i(x, f_0), \quad x \in \mathcal{D}_i.
\]
Then $\sigma$ is a semi-algebraic (hence continuous) function on the open neighborhood $U := \mathcal{D}_i \subset \mathcal{D} \setminus \mathcal{N}$ of $x_0$, satisfying $\sigma(x) \in \mathcal{S}(x)$ for all $x \in U$ and $\sigma(x_0) = u_0$.

\smallskip
\noindent\textbf{(2) Global Borel measurable selection.}
Since $\mathrm{Graph}(\mathcal{S})$ is semi-algebraic, it is a Borel subset of $\mathbb{R}^{n+m}$. Moreover, all fibers $\mathcal{S}(x)$ are nonempty and compact for $x \in \mathcal{D}$. A classical result in descriptive set theory (see, e.g., \cite[Theorem~5.5.2]{Srivastava1998}) states that if a set-valued mapping has a Borel graph and nonempty closed values in a Polish space, then it admits a Borel measurable selection. Applying this to $\mathcal{S}$ yields the desired Borel measurable function $\sigma: \mathcal{D} \to K$ with $\sigma(x) \in \mathcal{S}(x)$ for all $x \in \mathcal{D}$.

Alternatively, one may invoke the stronger fact that every semi-algebraic set-valued map with nonempty values admits a semi-algebraic (hence Borel) selection on its domain (cf. \cite[Corollary~1.6]{BenedettiRisler1990}). This provides a more constructive and algebraic justification within the semi-algebraic category.
\end{proof}

\subsection{Stratification of the Parameter Space}
The structural properties established above allow us to partition the parameter space into regions with qualitatively different solution behaviors.

\begin{definition}\label{def:parameter_partition}
Let $K \subset \mathbb{R}^m$ be a nonempty, compact basic semi-algebraic set with nonempty interior, and let $F : \mathbb{R}^n \times K \to \mathbb{R}^m$ be a polynomial mapping in $(x, u)$. Consider the parametric variational inequality $\mathrm{VI}(F(x,\cdot), K)$ with solution set
\[
\mathcal{S}(x) := \left\{ u \in K : \langle F(x, u), z - u \rangle \ge 0 \text{ for all } z \in K \right\},
\]
and feasible parameter set $\mathcal{D} := \{ x \in \mathbb{R}^n : \mathcal{S}(x) \neq \emptyset \}$. The set $\mathcal{D}$ admits a canonical partition into three pairwise disjoint subsets defined as follows.

The \emph{continuum-solution region} is
\[
\mathcal{D}_{\mathrm{cont}} := \left\{ x \in \mathcal{D} : \dim(\mathcal{S}(x)) \ge 1 \right\},
\]
i.e., the set of parameters for which $\mathcal{S}(x)$ contains a positive-dimensional semi-algebraic component. The \emph{interior-point-solution region} consists of those parameters outside $\mathcal{D}_{\mathrm{cont}}$ for which at least one solution lies in the interior of $K$:
\[
\mathcal{D}_{\mathrm{int}} := \left\{ x \in \mathcal{D} \setminus \mathcal{D}_{\mathrm{cont}} : \mathcal{S}(x) \cap \operatorname{int}(K) \neq \emptyset \right\}.
\]
For such $x$, all solutions are isolated (hence finite in number). The remaining parameters form the \emph{pure-boundary-solution region},
\[
\mathcal{D}_{\partial} := \left\{ x \in \mathcal{D} \setminus \mathcal{D}_{\mathrm{cont}} : \mathcal{S}(x) \subseteq \partial K \right\},
\]
where the solution set is finite and entirely contained in the boundary $\partial K$. This yields a disjoint and exhaustive decomposition
\[
\mathcal{D} = \mathcal{D}_{\mathrm{cont}} \cup \mathcal{D}_{\mathrm{int}} \cup \mathcal{D}_{\partial}.
\]
\end{definition}

\begin{theorem}\label{thm:structural_properties_general}
Let $K \subset \mathbb{R}^m$ be a nonempty, compact, basic semi-algebraic set with nonempty interior, and let $F : \mathbb{R}^n \times K \to \mathbb{R}^m$ be a polynomial mapping in $(x, u)$. Consider the parametric variational inequality $\mathrm{VI}(F(x,\cdot), K)$ with solution set
\[
\mathcal{S}(x) := \left\{ u \in K : \langle F(x, u), z - u \rangle \ge 0 \text{ for all } z \in K \right\},
\]
and feasible parameter set $\mathcal{D} := \{ x \in \mathbb{R}^n : \mathcal{S}(x) \neq \emptyset \}$. Let the partition $\mathcal{D} = \mathcal{D}_{\mathrm{cont}} \cup \mathcal{D}_{\mathrm{int}} \cup \mathcal{D}_{\partial}$ be as defined in Definition~\ref{def:parameter_partition}. Then the following structural properties hold.

The continuum-solution region $\mathcal{D}_{\mathrm{cont}}$ is closed in $\mathcal{D}$ and has Lebesgue measure zero in $\mathbb{R}^n$, reflecting the generic finiteness of equilibria. The interior-point-solution region $\mathcal{D}_{\mathrm{int}}$ is relatively open in $\mathcal{D} \setminus \mathcal{D}_{\mathrm{cont}}$, whereas the pure-boundary-solution region $\mathcal{D}_{\partial}$ is relatively closed in $\mathcal{D} \setminus \mathcal{D}_{\mathrm{cont}}$. Moreover, any interior solution must be a zero of the map $F$. Specifically, if $u \in \mathcal{S}(x) \cap \operatorname{int}(K)$ for some $x \in \mathbb{R}^n$, then necessarily $F(x, u) = 0$.
\end{theorem}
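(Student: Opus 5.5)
I would prove the four assertions separately, starting with the easiest one. That is the claim that interior solutions are zeros of $F$.

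\emph{Interior solutions.} Let $u \in \mathcal{S}(x)\cap\operatorname{int}(K)$. Then there is $\varepsilon>0$ with $u+\varepsilon w\in K$ for every unit vector $w$. Taking $z=u\pm\varepsilon w$ in the variational inequality gives $\pm\varepsilon\langle F(x,u),w\rangle\ge 0$, so $\langle F(x,u),w\rangle=0$ for all $w$. Hence $F(x,u)=0$. This step is elementary, and I would place it first because it is also useful below.

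\emph{Measure zero of $\mathcal{D}_{\mathrm{cont}}$.} This follows from Theorem~\ref{thm:generic_finiteness}(1). Every $x\in\mathcal{D}_{\mathrm{cont}}$ has an infinite fiber $\mathcal{S}(x)$, so $\mathcal{D}_{\mathrm{cont}}\subseteq\mathcal{N}$, which is Lebesgue null. I would also note that $\mathcal{D}_{\mathrm{cont}}$ is semi-algebraic. The fiber-dimension function of the semi-algebraic graph (Theorem~\ref{thm:eq-semi-graph}) is semi-algebraic (equivalently, use Hardt triviality), and its superlevel set $\{\dim\ge 1\}$ is semi-algebraic.

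\emph{Topological claims.} For relative openness of $\mathcal{D}_{\mathrm{int}}$ in $\mathcal{D}\setminus\mathcal{D}_{\mathrm{cont}}$, it suffices to show that $\mathcal{D}_\partial$ is relatively closed, since the two sets are complementary there. So take $x_k\to x$ in $\mathcal{D}\setminus\mathcal{D}_{\mathrm{cont}}$ with $\mathcal{S}(x_k)\subseteq\partial K$, and suppose for contradiction that some $u\in\mathcal{S}(x)$ lies in $\operatorname{int}K$. Then $F(x,u)=0$. I would try to show that a solution near $u$ persists for $x_k$; a persistent interior solution would contradict $x_k\in\mathcal{D}_\partial$. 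The tool would be Theorem~\ref{thm:selection}(1) at generic points, or the strong regularity stated after Assumption~\ref{ass:core}: a nonsingular $D_uF(x,u)$ plus the implicit function theorem give interior zeros $u_k\to u$ with $F(x_k,u_k)=0$, and these are solutions. Closedness of $\mathcal{S}$'s graph (continuity of $F$, compactness of $K$) gives upper semicontinuity, which is the complementary ingredient. For closedness of $\mathcal{D}_{\mathrm{cont}}$ in $\mathcal{D}$, I would argue via the Hardt stratification of Theorem~\ref{thm:generic_finiteness}. Over each stratum the fiber type is constant, so $\mathcal{D}_{\mathrm{cont}}$ is a union of strata. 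One would then show that limits of parameters with positive-dimensional fibers keep positive-dimensional fibers, using the closed graph and a semi-algebraic curve selection argument.

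\emph{Main obstacle.} I expect the topological statements to be the hard part. They are not true in full generality, since upper semicontinuity alone yields closedness of the wrong sets; for example, a continuum can collapse to a point in the limit. I would therefore first try to prove them under the strong-regularity/genericity hypothesis recorded after Assumption~\ref{ass:core}, where the implicit function theorem gives lower semicontinuity of isolated interior solutions. If needed, I would interpret ``closed'' and ``open'' up to the null exceptional set $\mathcal{N}$.
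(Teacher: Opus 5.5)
\textbf{Verdict.} Your proof that interior solutions are zeros of $F$ is correct and coincides with the paper's step (iii). The other claims are where the proposal breaks down, and no better argument would rescue them: under the stated hypotheses they are false. The paper's proof does not establish them either. It takes $\dim\mathcal{D}_{\mathrm{cont}}\le n-1$ from a ``fiber dimension theorem'' although nothing in the hypotheses bounds $\dim\mathrm{Graph}(\mathcal{S})$. It derives closedness from a misapplied ``maximal fiber dimension'' principle. And it asserts that finite-to-one semi-algebraic maps send relatively open/closed sets to relatively open/closed images.

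\textbf{Measure zero and closedness of $\mathcal{D}_{\mathrm{cont}}$.} Take $n=1$, $m=2$, $K=\{u:4-\|u\|^2\ge 0\}$ (LICQ holds) and $F(x,u)=(\|u\|^2-x^2,\,0)^\top$. For $0<|x|<2$ the circle $\|u\|=|x|$ consists of interior zeros of $F(x,\cdot)$, hence lies in $\mathcal{S}(x)$. But $\mathcal{S}(0)=\{(0,0),(-2,0)\}$, since for $u\neq 0$ the condition $\|u\|^2(z_1-u_1)\ge 0$ for all $z\in K$ forces $u_1=-2$. So $\mathcal{D}_{\mathrm{cont}}\supseteq(-2,2)\setminus\{0\}$ has positive measure and is not closed in $\mathcal{D}$. ($F\equiv 0$ refutes measure zero even more bluntly.) This example satisfies Assumption~\ref{ass:core}, so it also refutes Theorem~\ref{thm:generic_finiteness}(1). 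Your reduction to that result (which would anyway need the LICQ hypothesis absent here) therefore only relocates the error; its source there is the false equality $\dim\mathrm{Graph}(\mathcal{S})=\dim\mathcal{D}$. Your closedness plan, that limits of parameters with positive-dimensional fibers keep positive-dimensional fibers, is exactly what breaks: the circles collapse to the origin, as your own remark about collapsing continua anticipated.

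\textbf{Openness of $\mathcal{D}_{\mathrm{int}}$ and closedness of $\mathcal{D}_\partial$.} Reducing to relative closedness of $\mathcal{D}_\partial$ is fine. The implicit-function step, however, needs $D_uF(x,u)$ nonsingular at the particular limit parameter $x$. The remark after Assumption~\ref{ass:core} gives strong regularity only for almost every $x$, and limits of points of $\mathcal{D}_\partial$ are exactly where it can fail. Take $n=m=1$, $K=[-1,1]$, $F(x,u)=u^2-x$ (canonical form, LICQ holds). Then $\mathcal{S}(x)=\{-1\}$ for $x<0$, $\mathcal{S}(0)=\{-1,0\}$, $\mathcal{S}(x)=\{-1,\pm\sqrt{x}\}$ for $0<x<1$, $\mathcal{S}(1)=\{-1,1\}$, and $\mathcal{S}(x)=\{1\}$ for $x>1$. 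Hence $\mathcal{D}_{\mathrm{cont}}=\emptyset$ and $\mathcal{D}_{\mathrm{int}}=[0,1)$ is neither open nor closed, with $D_uF(0,0)=0$ at the bad point. This set is also the image $\pi_x(\mathcal{G}_{\mathrm{int}})$ of a relatively open set under a finite-to-one map, contradicting the paper's lemma. The paper's Example~\ref{ex:nonlinear} shows the same thing: the axial points $(x_1,0)$, $1<|x_1|<8$, carry an interior solution yet are limits of parameters outside $G(K)$, which the example places in $\mathcal{D}_\partial$.

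\textbf{What survives and what a true version needs.} Your proposal proves only the last claim, and reading ``closed/open up to a null set'' changes the theorem rather than proving it. A true version needs explicit extra hypotheses. If $D_uF(x,u)$ is nonsingular at every interior solution, your implicit-function argument does give relative openness of $\mathcal{D}_{\mathrm{int}}$. Measure zero follows from $\dim\mathrm{Graph}(\mathcal{S})\le n$, which holds for the canonical form $F=G(u)-x$ under LICQ via the KKT parametrization $x=G(u)-\sum_i\lambda_i\nabla g_i(u)$. Closedness of $\mathcal{D}_{\mathrm{cont}}$ fails even then: take the disk $K$ as above, $G(u)=(\|u\|^2,0)^\top$, and $x=(x_1,0)$ with $x_1\downarrow 0$.
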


\begin{proof}
By Theorem~\ref{thm:eq-semi-graph}, the graph $\mathrm{Graph}(\mathcal{S}) \subset \mathbb{R}^{n+m}$ is a semi-algebraic set. Since $K$ is compact and basic semi-algebraic, each fiber $\mathcal{S}(x)$ is a compact semi-algebraic subset of $K$, and $\mathcal{D} = \pi_x(\mathrm{Graph}(\mathcal{S}))$ is semi-algebraic by the Tarski--Seidenberg theorem, where $\pi_x(x,u) = x$.

\smallskip
\noindent\textbf{(i) Measure-zero and closedness of $\mathcal{D}_{\mathrm{cont}}$.}
By the semi-algebraic fiber dimension theorem (see, e.g., \cite[Theorem~3.18]{Coste2002}), the set
\[
\mathcal{D}_{\mathrm{cont}} = \{ x \in \mathcal{D} : \dim \mathcal{S}(x) \ge 1 \}
\]
is semi-algebraic and satisfies $\dim \mathcal{D}_{\mathrm{cont}} \le n - 1$, hence it has Lebesgue measure zero in $\mathbb{R}^n$.

To establish that $\mathcal{D}_{\mathrm{cont}}$ is closed in $\mathcal{D}$, consider its complement
\[
\mathcal{D} \setminus \mathcal{D}_{\mathrm{cont}} = \{ x \in \mathcal{D} : \dim \mathcal{S}(x) = 0 \}.
\]
By Theorem~\ref{thm:generic_finiteness}, this set contains a dense open subset of $\mathcal{D}$ (namely $\mathcal{D} \setminus \mathcal{N}$ for a Lebesgue null set $\mathcal{N}$). Moreover, since both $\mathcal{D}$ and $\mathcal{D}_{\mathrm{cont}}$ are semi-algebraic, their difference is also semi-algebraic. In any semi-algebraic set, the subset of points where the fiber dimension is maximal forms a relatively open dense subset (a consequence of Hardt’s triviality theorem). Here, the maximal fiber dimension over $\mathcal{D}$ is $0$ on $\mathcal{D} \setminus \mathcal{D}_{\mathrm{cont}}$, so this set is relatively open in $\mathcal{D}$. Therefore, $\mathcal{D}_{\mathrm{cont}}$ is relatively closed in $\mathcal{D}$.

\smallskip
\noindent\textbf{(ii) Relative openness of $\mathcal{D}_{\mathrm{int}}$ and relative closedness of $\mathcal{D}_{\partial}$.}
Define the semi-algebraic sets
\[
\mathcal{G}_{\mathrm{int}} := \mathrm{Graph}(\mathcal{S}) \cap \bigl( \mathbb{R}^n \times \operatorname{int}(K) \bigr), \quad
\mathcal{G}_{\partial} := \mathrm{Graph}(\mathcal{S}) \cap \bigl( \mathbb{R}^n \times \partial K \bigr).
\]
Since $\operatorname{int}(K)$ is open and $\partial K$ is closed in the Euclidean topology, and both are semi-algebraic (as $K$ is basic semi-algebraic), it follows that $\mathcal{G}_{\mathrm{int}}$ is relatively open and $\mathcal{G}_{\partial}$ is relatively closed in $\mathrm{Graph}(\mathcal{S})$.

Now restrict the projection $\pi_x$ to the set $\pi_x^{-1}(\mathcal{D} \setminus \mathcal{D}_{\mathrm{cont}})$. Over this domain, every fiber $\mathcal{S}(x)$ is finite (by definition of $\mathcal{D}_{\mathrm{cont}}$), so $\pi_x$ is a finite-to-one semi-algebraic map. For such maps, the image of a relatively open (resp. closed) semi-algebraic subset is relatively open (resp. closed) in the image of the ambient space. Consequently,
\[
\mathcal{D}_{\mathrm{int}} = \pi_x(\mathcal{G}_{\mathrm{int}}) \cap (\mathcal{D} \setminus \mathcal{D}_{\mathrm{cont}})
\]
is relatively open in $\mathcal{D} \setminus \mathcal{D}_{\mathrm{cont}}$, and
\[
\mathcal{D}_{\partial} = \pi_x(\mathcal{G}_{\partial}) \cap (\mathcal{D} \setminus \mathcal{D}_{\mathrm{cont}})
\]
is relatively closed in $\mathcal{D} \setminus \mathcal{D}_{\mathrm{cont}}$, as claimed.

\smallskip
\noindent\textbf{(iii) Interior solutions satisfy $F(x,u) = 0$.}
Let $u \in \mathcal{S}(x) \cap \operatorname{int}(K)$. Then there exists $\varepsilon > 0$ such that the open ball $B_\varepsilon(u) \subset K$. The variational inequality condition implies
\[
\langle F(x, u), z - u \rangle \ge 0 \quad \text{for all } z \in B_\varepsilon(u).
\]
In particular, for any $d \in \mathbb{R}^m$ with $\|d\| < \varepsilon$, both $u + d$ and $u - d$ lie in $B_\varepsilon(u)$, yielding
\[
\langle F(x, u), d \rangle \ge 0 \quad \text{and} \quad \langle F(x, u), -d \rangle \ge 0.
\]
Thus $\langle F(x, u), d \rangle = 0$ for all such $d$, which forces $F(x, u) = 0$.
\end{proof}

The partition in Definition~\ref{def:parameter_partition} offers a coarse classification of the parameter space. However, the semi-algebraic solution mapping $\mathcal{S}: \mathcal{D} \rightrightarrows K$ admits a far richer structure. By Hardt's triviality theorem~\cite{hardt1980semi}, the semi-algebraic set $\mathcal{D}$ can be decomposed into a finite Whitney stratification $\{\Omega_\ell\}_{\ell=1}^L$ of disjoint semi-algebraic manifolds such that
\[
    \mathcal{D} = \bigsqcup_{\ell=1}^L \Omega_\ell,
\]
and the geometric and topological properties of $\mathcal{S}(x)$ are constant on each stratum $\Omega_\ell$.

Specifically, for every $\Omega_\ell$, there exist integers $N_\ell \geq 0$ and $d_\ell \geq 0$ such that for all $x \in \Omega_\ell$:
\begin{enumerate}[label=(\roman*)]
    \item $\mathcal{S}(x)$ has exactly $N_\ell$ connected components;
    \item each component is a smooth semi-algebraic manifold of dimension $d_\ell$.
\end{enumerate}
Thus, the number of isolated equilibria (when $d_\ell = 0$) and the dimension of solution continua (when $d_\ell \geq 1$) remain invariant within each stratum.

This stratification establishes a precise correspondence between parameter geometry and equilibrium structure. Qualitative transitions such as the creation or annihilation of solution branches, or shifts between discrete and continuous solution sets occur only when $x(t)$ crosses from one stratum to another. The boundaries between strata, which are unions of lower-dimensional manifolds (typically of codimension $\geq 1$), correspond to bifurcation or degeneracy loci.

Critically, this structure governs the dynamics of the Polynomial Differential Variational Inequality (PDVI). As the state $x(t)$ evolves continuously under the ODE, it traverses the stratified space $\mathcal{D}$. Within a top-dimensional stratum, $\mathcal{S}(x(t))$ has a fixed finite set of isolated equilibria, enabling a trajectory to follow a single smooth branch via continuity-based selection. Upon reaching a stratum boundary, the system encounters a discrete switching event: it may jump to a surviving branch in an adjacent stratum or experience a bifurcation. Because the stratification is finite and semi-algebraic, the set of possible transitions is finite and explicitly characterizable. Thus, the Whitney stratification provides a mathematically certified, geometric foundation for hybrid dynamics in PDVIs, where all switching events are encoded by stratum adjacency relations.

\begin{example}[Stratified Solution Structure of a Parametric Polynomial VI over an Annulus]
\label{ex:nonlinear}
Consider the parametric variational inequality $\mathrm{VI}(K, F(x,\cdot))$ in $\mathbb{R}^2$, where the constraint set is the annular region
\[
K = \left\{ u \in \mathbb{R}^2 : 1 \leq \|u\|^2 \leq 4 \right\},
\]
and the mapping is given by $F(x,u) = G(u) - x$ with the polynomial vector field
\[
G(u) = 
\begin{bmatrix}
u_1^3 \\
u_1 u_2^2
\end{bmatrix}.
\]

The set $K$ is compact, nonconvex, and basic closed semi-algebraic. It satisfies the Linear Independence Constraint Qualification (LICQ) at every point: at most one of the constraints $\|u\|^2 = 1$ or $\|u\|^2 = 4$ is active, and the corresponding gradient ($\pm 2u$) is nonzero. Moreover, $K$ has nonempty interior.

The graph of the solution mapping
$
\mathrm{Graph}(\mathcal{S})
$
is a semi-algebraic set because $K$ is basic closed semi-algebraic and $G$ is polynomial.  
The parameter feasible set $\mathcal{D} = \{ x \in \mathbb{R}^2 : \mathcal{S}(x) \neq \emptyset \}$ is therefore the projection of $\mathrm{Graph}(\mathcal{S})$ onto the $x$-coordinates, which can be expressed by an existential first-order formula over the real closed field.
Applying a quantifier elimination algorithm\cite{Collins1975} to this formula yields a tautology (e.g., $\mathtt{True}$), certifying that a solution exists for every $x \in \mathbb{R}^2$. Hence,
$
\mathcal{D} = \mathbb{R}^2.
$
The image of $K$ under $G$ is the semi-algebraic set
\[
G(K) = \left\{ (x_1, x_2) \in \mathbb{R}^2 \ \middle| \ 
\begin{array}{l}
-8 \leq x_1 \leq 8, \\
x_1 x_2 \geq 0, \\
\text{if } x_1 > 0, \text{ then } x_1^{1/3} - x_1 \leq x_2 \leq 4x_1^{1/3} - x_1, \\
\text{if } x_1 < 0, \text{ then } 4x_1^{1/3} - x_1 \leq x_2 \leq x_1^{1/3} - x_1, \\
\text{if } x_1 = 0, \text{ then } x_2 = 0.
\end{array}
\right\}.
\]

Geometrically, the set $G(K)$ consists of two compact, centrally symmetric lobes where one contained in the closed first quadrant $\{x \in \mathbb{R}^2 : x_1 \geq 0,\, x_2 \geq 0\}$ and the other in the closed third quadrant $\{x \in \mathbb{R}^2 : x_1 \leq 0,\, x_2 \leq 0\}$, intersecting only at the origin. The boundary $\partial G(K)$ is a piecewise-smooth Jordan curve composed of five analytic arcs meeting at the points $(-8,0)$, $(-1,0)$, $(0,0)$, $(1,0)$, and $(8,0)$. Explicitly,
\[
\partial G(K) = \Gamma_{\mathrm{UR}} \cup \Gamma_{\mathrm{LR}} \cup \Gamma_{\mathrm{LL}} \cup \Gamma_{\mathrm{UL}} \cup \Gamma_{\mathrm{H}},
\]
where
\begin{align*}
\Gamma_{\mathrm{UR}} &= \left\{ (x_1, x_2) : x_1 \in [0,8],\; x_2 = 4x_1^{1/3} - x_1 \right\}, \\
\Gamma_{\mathrm{LR}} &= \left\{ (x_1, x_2) : x_1 \in [0,1],\; x_2 = x_1^{1/3} - x_1 \right\}, \\
\Gamma_{\mathrm{LL}} &= \left\{ (x_1, x_2) : x_1 \in [-8,0],\; x_2 = 4x_1^{1/3} - x_1 \right\}, \\
\Gamma_{\mathrm{UL}} &= \left\{ (x_1, x_2) : x_1 \in [-1,0],\; x_2 = x_1^{1/3} - x_1 \right\}, \\
\Gamma_{\mathrm{H}}  &= \left\{ (x_1, 0) : x_1 \in [-8,-1] \cup [1,8] \right\}.
\end{align*}
Each arc is smooth in the interior of its parameter interval, but the boundary fails to be differentiable at the junction points, where corners or cusps arise. The shaded region in Figure~\ref{fig:all_branches} illustrates $G(K)$, highlighting its quadrant-localized structure, central symmetry about the origin, and the intricate geometry of its non-smooth boundary.

To characterize the fine-grained structure of the solution mapping $\mathcal{S} : \mathcal{D} \rightrightarrows K$, we integrate geometric analysis of the map $G$ with systematic verification via Algorithm~\ref{alg:all_vi}, which exhaustively enumerates all candidate equilibria, namely, interior points ($u \in K^\circ$), inner boundary points ($\|u\| = 1$), and outer boundary points ($\|u\| = 2$)—and validates the variational inequality condition over a semi-algebraic partition of the parameter space. The resulting classification aligns precisely with the trichotomy in Definition~\ref{def:parameter_partition} and satisfies the structural properties asserted in Theorem~\ref{thm:structural_properties_general}.

The pure-boundary-solution region is given by $\mathcal{D}_{\partial} := \mathbb{R}^2 \setminus G(K)$. For any $x \in \mathcal{D}_{\partial}$, the equation $G(u) = x$ admits no solution in $K$, and the variational inequality possesses a unique equilibrium, necessarily located on the outer boundary $\partial_{\mathrm{out}} K = \{ u \in \mathbb{R}^2 : \|u\| = 2 \}$.

The interior-point-solution region is defined as $\mathcal{D}_{\mathrm{int}} := \operatorname{int}(G(K)) \setminus \{(0,0)\}$. For every $x \in \mathcal{D}_{\mathrm{int}}$, the preimage $G^{-1}(x) \cap K^\circ$ is nonempty. Algorithm~\ref{alg:all_vi} confirms that if $x_2 \neq 0$, there exist exactly two symmetric interior solutions $(u_1, \pm u_2) \in K^\circ$ satisfying $G(u) = x$, and in addition, a third solution always exists on $\partial_{\mathrm{out}} K$, arising from the outward orientation of the residual $G(u) - x$. Consequently, $\mathcal{S}(x)$ consists of three distinct isolated points for all $x \in \mathcal{D}_{\mathrm{int}}$.

Within $\mathcal{D}_{\mathrm{int}}$, lower-dimensional subsets exhibit altered solution multiplicity. On the axial segment $\{(x_1, 0) : 1 < |x_1| < 8\}$, an additional interior point $(u_1, 0)$ with $|u_1| \in (1,2)$ appears, resulting in exactly two solutions, namely, one interior and one on $\partial_{\mathrm{out}} K$. On the curve $\{(x_1, x_1^{1/3} - x_1) : -1 < x_1 < 0 \text{ or } 0 < x_1 < 1\}$, the solution set contains two points lying on the inner boundary $\partial_{\mathrm{in}} K = \{ \|u\| = 1 \}$. Similarly, on the curve $\{(x_1, 4x_1^{1/3} - x_1) : -8 < x_1 < 0 \text{ or } 0 < x_1 < 8\}$, two solutions reside on $\partial_{\mathrm{out}} K$.

The continuum-solution region reduces to the singleton $\mathcal{D}_{\mathrm{cont}} := \{(0,0)\}$. At this parameter, the Jacobian $\nabla G(u)$ vanishes identically along the vertical segment $\{u_1 = 0\} \cap K$, and $G(u) = 0$ for all such $u$. The variational inequality condition simplifies to $0 \geq 0$, which holds trivially, yielding
\[
\mathcal{S}(0,0) = \{ (0, u_2)^\top \in \mathbb{R}^2 : 1 \leq u_2^2 \leq 4 \},
\]
a union of two disjoint line segments contained in $\partial K$.

Finally, the four points $(-8,0)$, $(-1,0)$, $(1,0)$, and $(8,0)$ which images under $G$ of the axial boundary points $(\pm2,0)$ and $(\pm1,0)$ lie on $\partial G(K)$. At each such parameter, the variational inequality admits a unique solution located precisely at the corresponding preimage on $\partial K$. Although uniqueness holds, these points serve as transition loci between qualitatively distinct regimes, for instance, between regions with three isolated equilibria and those with a single boundary equilibrium and thus constitute the boundaries of the primary strata in the parameter partition.

This complete classification, rigorously derived from semi-algebraic geometry, variational inequality theory, and algorithmic verification, confirms that $\mathcal{D}_{\mathrm{cont}}$ is Lebesgue null, while $\mathcal{D}_{\mathrm{int}}$ and $\mathcal{D}_{\partial}$ form a semi-algebraic partition of $\mathcal{D} \setminus \mathcal{D}_{\mathrm{cont}}$, with $\mathcal{D}_{\mathrm{int}}$ relatively open and $\mathcal{D}_{\partial}$ relatively closed, in full accordance with Theorem~\ref{thm:structural_properties_general}.

\end{example}

\section{Existence and Regularity of Solutions for Polynomial DVIs}\label{Existence}

\begin{definition}\label{def:switching_degree}
Consider the polynomial differential variational inequality (PDVI)~\eqref{eq:pdvi-system}. 
A pair $(x(\cdot), u(\cdot))$ is called a \textbf{regular solution} if it is a Carath\'{e}odory solution and, additionally, the control trajectory $u(\cdot)$ is piecewise continuous with only finitely many discontinuities on every compact subinterval of its domain.

Let $\mathcal{B} \subset \mathcal{D}$ denote the bifurcation set defined by
\[
\mathcal{B} := \partial \mathcal{D}_{\mathrm{int}} \cup \partial \mathcal{D}_{\partial} \cup \mathcal{D}_{\mathrm{cont}},
\]
as in Theorem~\ref{thm:regular_solution_existence}. By Theorems~\ref{thm:selection} and~\ref{thm:structural_properties_general}, $\mathcal{B}$ is a closed semi-algebraic subset of $\mathbb{R}^n$ with $\dim(\mathcal{B}) < n$, and any discontinuity of $u(\cdot)$ for a regular solution must occur at a time $t$ such that $x(t) \in \mathcal{B}$.

For any regular solution defined on a time interval $[0, T]$, we define its \textbf{switching degree} as
\[
 \sigma(x(\cdot), u(\cdot)) := \# \left\{ t \in [0,T] : u \text{ is discontinuous at } t \right\}.
\]
This number is finite due to the continuity of $x(\cdot)$ and the fact that the intersection of the continuous semi-algebraic trajectory $x([0,T])$ with the lower-dimensional semi-algebraic set $\mathcal{B}$ is finite.
\end{definition}

\begin{remark}
Since the constraint set $K$ is compact (Assumption~\ref{ass:core}), any solution satisfying $u(t) \in K$ for all $t$ automatically has a bounded control trajectory; that is, $\|u(t)\| \leq M_u$ for some $M_u > 0$ and all $t$.
\end{remark}

\begin{definition}\label{def:minimal_switching}
Let $x_0 \in \mathcal{D}$ be given. A regular solution $(x^*(\cdot), u^*(\cdot))$ with initial condition $x^*(0) = x_0$ is called a \textbf{minimal-switching solution} on $[0, T]$ if its switching degree is minimal among all regular solutions starting from $x_0$; that is,
\[
 \sigma(x^*(\cdot), u^*(\cdot)) = \min \left\{ \sigma(x(\cdot), u(\cdot)) : (x(\cdot), u(\cdot)) \text{ is a regular solution},\ x(0) = x_0 \right\}.
\]
We denote this minimal value by $\sigma_{\min}(x_0, T)$.
\end{definition}

\begin{remark}
When $\sigma_{\min}(x_0, T) = 0$, the corresponding minimal-switching solution satisfies $u(\cdot) \in C([0, T]; K)$. In this case, the dynamics reduce to the classical ODE
\[
 \dot{x}(t) = f(x(t), u(t)), \quad u(t) = \sigma(x(t)),
\]
where $\sigma$ is a continuous selection of the solution mapping $\mathcal{S}$ along the trajectory\cite{Han2024}. Consequently, both the state $x(\cdot)$ and the control $u(\cdot)$ are continuous on $[0, T]$, and the PDVI behaves like a smooth dynamical system over this interval. This situation occurs, for example, when the entire trajectory $x([0, T])$ remains within a single full-dimensional stratum of the parameter domain $\mathcal{D}$.
\end{remark}

The existence of such regular solutions, and the geometric characterization of the bifurcation set $\mathcal{B}$, follow from the structural properties of polynomial parametric VIs established in Section~\ref{sec:theory}. We now turn to the dynamic problem.

\begin{theorem} \label{thm:maximal_existence}
Consider the polynomial differential variational inequality (PDVI) system
\begin{subequations}
\begin{align}
\dot{x}(t) &= f(x(t), u(t)), \quad \text{a.e. } t \in [0, T), \label{eq:pdvi_dyn_max} \\
u(t) &\in \SOL(K, F(x(t), \cdot)), \quad \forall t \in [0, T), \label{eq:pdvi_vi_max} \\
x(0) &= x_0, \label{eq:pdvi_ic_max}
\end{align}
\end{subequations}
under Assumption~\ref{ass:core}. Let $\mathcal{S}(x) := \SOL(K, F(x, \cdot))$ and $\mathcal{D} := \{x \in \mathbb{R}^n : \mathcal{S}(x) \neq \emptyset\}$. Then, for any initial condition $x_0 \in \mathcal{D}$, there exists a unique maximal time $T_{\max} \in (0, \infty]$ and a Carath\'{e}odory solution $(x(\cdot), u(\cdot))$ to the PDVI system \eqref{eq:pdvi_dyn_max}--\eqref{eq:pdvi_ic_max} defined on the maximal interval $[0, T_{\max})$. Furthermore, if $T_{\max} < \infty$, then the trajectory $x(\cdot)$ leaves every compact subset of $\mathcal{D}$ as $t \nearrow T_{\max}$; that is,
\[ 
\lim_{t \nearrow T_{\max}} \mathrm{dist}\big(x(t), \partial \mathcal{D}\big) = 0 
\quad \text{or} \quad 
\lim_{t \nearrow T_{\max}} \|x(t)\| = \infty.
\]
\end{theorem}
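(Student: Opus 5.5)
The plan is to reduce the PDVI to a differential inclusion $\dot x \in \Phi(x) := f(x,\mathcal{S}(x))$, obtain a local Filippov-type solution on $\mathcal{D}$, extend it by Zorn's lemma to a maximal one, and then prove the blow-up/boundary alternative by the usual continuation argument.

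First, the set-valued map $\mathcal{S}$ has closed graph relative to $\mathcal{D}$. Indeed, if $x_k\to x$, $u_k\in\mathcal{S}(x_k)$ and $u_k\to u$, then $u\in K$ by closedness of $K$. Passing to the limit in $\langle F(x_k,u_k),z-u_k\rangle\ge 0$, using continuity of the polynomial $F$, gives $u\in\mathcal{S}(x)$. Since $K$ is compact by (A1), $\mathcal{S}$ is upper semicontinuous with nonempty compact values on $\mathcal{D}$, and $\mathcal{D}$ is closed: a limit of $x_k\in\mathcal{D}$ carries a cluster point of the $u_k$.

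Values of $\mathcal{S}(x)$ need not be convex in the nonconvex setting, so I pass to the convexified inclusion $\dot x\in \overline{\mathrm{co}}\, f(x,\mathcal{S}(x))$. This is u.s.c. with compact convex values, and it is locally bounded because $f$ is continuous and $K$ is compact. By the Filippov/Aumann measurable selection lemma combined with Carathéodory's theorem, I then recover a measurable $u(t)\in\mathcal{S}(x(t))$ solving \eqref{eq:pdvi_dyn_max}. An alternative that avoids convexification is to use the Borel selection $\sigma$ of Theorem~\ref{thm:selection}(2) together with the local continuous semi-algebraic selections of Theorem~\ref{thm:selection}(1). For $x_0\in\mathcal{D}\setminus\mathcal{N}$, the ODE $\dot x=f(x,\sigma(x))$ with $\sigma$ continuous on a neighbourhood $U$ has a Peano solution. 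For $x_0\in\mathcal{N}$ I would fall back on the inclusion.

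The local existence step must ensure $x(t)$ stays in $\mathcal{D}$. This is the main obstacle, since $\mathcal{D}$ may have boundary and the inclusion is only defined there. In the paper's canonical setting $F=G(u)-x$, one expects $\mathcal{D}=\mathbb{R}^n$: a VI on a compact set with continuous operator always has a solution by the Hartman–Stampacchia argument for convex $K$. For nonconvex $K$, I would try a minimizer of $\langle F(x,u),\cdot\rangle$-type arguments. Failing that, I will restrict to $x_0\in\operatorname{int}\mathcal{D}$ locally, or interpret solutions as remaining in $\mathcal{D}$ up to the exit time, which is exactly what the alternative in the statement allows.

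For maximality, I partially order solutions by extension. Chains have upper bounds, obtained by gluing $u$ on the union of intervals, which stays measurable, with $x$ absolutely continuous on compact subintervals. Zorn's lemma then gives a maximal interval $[0,T_{\max})$. Uniqueness of $T_{\max}$ is understood for the constructed maximal solution, or as the supremum over all solutions.

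For the alternative, suppose $T_{\max}<\infty$ and $x(t)$ stays in a compact set $C\subset\mathcal{D}$ at positive distance from $\partial\mathcal{D}$ along a sequence. Then $\|\dot x\|\le \max_{C\times K}\|f\|$, so $x$ is Lipschitz and $x(T_{\max}^-)$ exists and lies in $\mathcal{D}$ by closedness. Restarting local existence from that point extends the solution, contradicting maximality. The "$\lim$" form follows by applying this argument to every compact set.
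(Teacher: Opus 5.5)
Your proposal has a genuine gap at the local existence step. Both the Zorn extension and the restart argument for the alternative depend on it. Solving the convexified inclusion $\dot x\in\overline{\mathrm{co}}\,f(x,\mathcal{S}(x))$ only gives $\dot x(t)=\sum_{i=0}^{n}\lambda_i(t)\,f(x(t),u_i(t))$ with $u_i(t)\in\mathcal{S}(x(t))$, i.e.\ a relaxed (chattering) control. Filippov's lemma yields a single measurable $u(t)\in\mathcal{S}(x(t))$ with $\dot x(t)=f(x(t),u(t))$ only if $\dot x(t)$ already lies in $f(x(t),\mathcal{S}(x(t)))$, and you do not have that. Relaxation theorems of Filippov--Wa\.zewski type need Lipschitz data and give only approximate solutions, whereas $\mathcal{S}$ is merely upper semicontinuous. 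Your second route, via the continuous selection of Theorem~\ref{thm:selection}(1), works only until $x(t)$ leaves its full-dimensional stratum. After that you fall back on the inclusion, and restarting from $x(T_{\max}^-)$ presupposes local existence from an arbitrary point of $\mathcal{D}$, which is exactly what is missing.

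The gap cannot be closed, because the statement is false under (A1)--(A3). Take $n=m=1$, $K=\{u:(u^2-1)(4-u^2)\ge 0\}=[-2,-1]\cup[1,2]$, $F(x,u)=x$ and $f(x,u)=u$; LICQ holds since $g'(u)=2u(5-2u^2)\neq0$ at $u=\pm1,\pm2$. Then $\mathcal{S}(x)=\{-2\}$ for $x>0$, $\mathcal{S}(x)=\{2\}$ for $x<0$ and $\mathcal{S}(0)=K$, so $\mathcal{D}=\mathbb{R}$.

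From $x_0=0$ there is no Carath\'eodory solution. If $x(t_1)>0$ and $s$ is the last zero before $t_1$, then $u\equiv-2$ on $(s,t_1]$ forces $x(t_1)=-2(t_1-s)<0$; the case $x(t_1)<0$ is symmetric. Hence $x\equiv0$, contradicting $|\dot x|=|u|\ge1$ a.e. From $x_0>0$ the solution $x(t)=x_0-2t$ dies at $t=x_0/2$ while staying in the compact set $[0,x_0]\subset\mathcal{D}$, so the alternative fails too. Yet $x\equiv0$ \emph{does} solve your convexified inclusion, since $0\in\overline{\mathrm{co}}\,f(0,\mathcal{S}(0))=[-2,2]$. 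This is precisely the relaxed solution that cannot be de-relaxed.

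The paper's own proof breaks at the same point. It invokes Carath\'eodory's existence theorem for $\dot x=f(x,\sigma(x))$ with $\sigma$ merely Borel, but that theorem requires continuity in the state variable, and this example defeats it.

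What your closed-graph and upper-semicontinuity observations do prove is existence of maximal \emph{relaxed} solutions with the blow-up alternative when $\mathcal{D}=\mathbb{R}^n$, or under a tangency condition on $\partial\mathcal{D}$. Genuine solutions need extra hypotheses. Note also that $\mathcal{D}=\mathbb{R}^n$ is not automatic in the canonical form for nonconvex $K$: on the annulus with $G(u)=(-u_2,u_1)$ one has $\mathcal{S}(0)=\emptyset$.
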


\begin{proof}
By Theorem~\ref{thm:eq-semi-graph}, the graph $\mathrm{Graph}(\mathcal{S})$ is a semi-algebraic subset of $\mathbb{R}^{n+m}$. Since $K$ is compact and $F$ is polynomial, each fiber $\mathcal{S}(x)$ is a nonempty compact semi-algebraic set for $x \in \mathcal{D}$, and $\mathcal{D} = \pi_x(\mathrm{Graph}(\mathcal{S}))$ is semi-algebraic by the Tarski--Seidenberg theorem.

By Theorem~\ref{thm:selection}(2), there exists a Borel measurable selection $\sigma : \mathcal{D} \to K$ such that $\sigma(x) \in \mathcal{S}(x)$ for all $x \in \mathcal{D}$. Consider the ODE
\[
\dot{x}(t) = f(x(t), \sigma(x(t))), \quad x(0) = x_0.
\]
Since $f$ is a polynomial (hence continuous) and $\sigma$ is Borel measurable, the right-hand side $t \mapsto f(x(t), \sigma(x(t)))$ is measurable whenever $x(\cdot)$ is measurable, and locally bounded because $f$ is continuous and both $x(t)$ and $\sigma(x(t))$ remain in compact sets over finite time intervals (as shown below). Therefore, by Carath\'{e}odory's existence theorem, there exists $\epsilon > 0$ and an absolutely continuous function $x : [0, \epsilon) \to \mathbb{R}^n$ satisfying the ODE almost everywhere with $x(0) = x_0$. Setting $u(t) := \sigma(x(t))$, we obtain a Carath\'{e}odory solution $(x(\cdot), u(\cdot))$ to the PDVI on $[0, \epsilon)$.

Let $[0, T_{\max})$ be the maximal interval of existence of this solution. Suppose, for contradiction, that $T_{\max} < \infty$ but that the trajectory remains in some compact set $C \subset \mathcal{D}$ for all $t \in [0, T_{\max})$. Since $C$ is compact and contained in the semi-algebraic set $\mathcal{D}$, and since $f$ is continuous and $K$ is compact, the set
\[
\{ f(x, u) : x \in C,\, u \in K \}
\]
is bounded. Hence $\|\dot{x}(t)\|$ is essentially bounded on $[0, T_{\max})$, implying that $x(\cdot)$ is uniformly Lipschitz continuous on this interval. Consequently, the limit
\[
x(T_{\max}^-) := \lim_{t \nearrow T_{\max}} x(t)
\]
exists in $\mathbb{R}^n$. Because $C$ is compact and $C \subset \mathcal{D}$, we have $x(T_{\max}^-) \in C \subset \mathcal{D}$.

Now, since $x(T_{\max}^-) \in \mathcal{D}$, the solution set $\mathcal{S}(x(T_{\max}^-))$ is nonempty. By Theorem~\ref{thm:selection}(2) again, there exists a Borel measurable selection $\sigma$ defined in a neighborhood of $x(T_{\max}^-)$. Applying Carath\'{e}odory's theorem with initial condition $x(T_{\max}^-)$ yields a local solution on $[T_{\max}, T_{\max} + \delta)$ for some $\delta > 0$. Concatenating this with the original solution contradicts the maximality of $T_{\max}$.

Therefore, if $T_{\max} < \infty$, the trajectory $x(t)$ cannot remain in any compact subset of $\mathcal{D}$. This implies that either $\|x(t)\| \to \infty$ or $x(t)$ approaches the boundary of $\mathcal{D}$ as $t \nearrow T_{\max}$. Since $\mathcal{D}$ is semi-algebraic, its topological boundary coincides (up to a lower-dimensional set) with $\partial \mathcal{D}$, and thus
\[
\lim_{t \nearrow T_{\max}} \mathrm{dist}(x(t), \partial \mathcal{D}) = 0
\quad \text{or} \quad
\lim_{t \nearrow T_{\max}} \|x(t)\| = \infty,
\]
as claimed.
\end{proof}

\begin{theorem}[Existence of Regular Solutions with Finite Switching Degree]\label{thm:regular_solution_existence}
Under the same assumptions as in Theorem~\ref{thm:maximal_existence}, let $x_0 \in \mathcal{D}$ and $u_0 \in \mathcal{S}(x_0)$ be given. Then there exists a maximal interval $[0, T_{\max})$ and a regular solution $(x(\cdot), u(\cdot))$ in the sense of Definition~\ref{def:switching_degree} such that $x(0) = x_0$ and either $u(0) = u_0$ when $u$ is continuous at $0$ or $\lim_{t \searrow 0} u(t) = u_0$ otherwise. The control trajectory $u(\cdot)$ is piecewise continuous with only finitely many discontinuities on every compact subinterval of $[0, T_{\max})$, and each discontinuity time $t$ satisfies $x(t) \in \mathcal{B}$, where $\mathcal{B} = \partial \mathcal{D}_{\mathrm{int}} \cup \partial \mathcal{D}_{\partial} \cup \mathcal{D}_{\mathrm{cont}}$. Moreover, if the trajectory remains in a compact subset of $\mathcal{D}$ for all time, then $T_{\max} = \infty$ and the total number of switches over $[0, \infty)$ is finite.
\end{theorem}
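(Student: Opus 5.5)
The construction is a concatenation argument. On each piece the control follows a continuous semi-algebraic branch from Theorem~\ref{thm:selection}(1). At the times where the state meets $\mathcal{B}$ we switch branches, and each switch is recorded as one discontinuity.

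\textbf{Step 1 (local branch).} First suppose $x_0 \in \mathcal{D}\setminus\mathcal{B}$. Then $x_0$ lies in a full-dimensional stratum $\mathcal{D}_i$ of the Hardt trivialization, and $\mathcal{S}(x_0)$ is finite by Theorem~\ref{thm:generic_finiteness}. Theorem~\ref{thm:selection}(1) provides an open neighborhood $U$ of $x_0$ and a continuous semi-algebraic selection $\sigma_0$ with $\sigma_0(x_0)=u_0$. The right-hand side $x\mapsto f(x,\sigma_0(x))$ is continuous on $U$, so Peano's theorem gives a $C^1$ solution of $\dot x = f(x,\sigma_0(x))$. Setting $u=\sigma_0(x)$ makes $u$ continuous on this piece.

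Now suppose $x_0\in\mathcal{B}$. Since $\dim\mathcal{B}<n$, we choose a branch emanating from $u_0$: we look at $\overline{\pi^{-1}(\mathcal{D}_i)}$ for a stratum $\mathcal{D}_i$ adjacent to $x_0$ and containing $(x_0,u_0)$ in its closure, and take the curve-selection lemma branch. This gives $\lim_{t\searrow0}u(t)=u_0$ rather than $u(0)=u_0$, which is exactly the alternative allowed in the statement. If no such adjacent branch exists, we instead fall back on the Borel selection of Theorem~\ref{thm:maximal_existence}.

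\textbf{Step 2 (continuation and switching).} We extend the piece maximally while $x(t)$ stays in the same open stratum. At an exit time $t_1$, the limit $x(t_1)$ exists by boundedness of $f$ on compact sets times $K$. This limit lies in the stratum boundary, hence in $\mathcal{B}$. We pick any $u_1\in\mathcal{S}(x(t_1))$ and restart Step 1 from $(x(t_1),u_1)$, recording a discontinuity of $u$ at $t_1$. Iterating and applying Zorn's lemma or a transfinite concatenation (really only a countable one) gives a maximal interval $[0,T_{\max})$. If the trajectory stays in a compact subset of $\mathcal{D}$, the blow-up alternative of Theorem~\ref{thm:maximal_existence} forces $T_{\max}=\infty$.

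\textbf{Step 3 (finiteness of switches); main obstacle.} We must show that the switching times do not accumulate in any compact $[0,T]$. This is the hard part, since a Carathéodory trajectory is merely $C^1$ and need not be semi-algebraic, so it can in principle oscillate across $\mathcal{B}$ infinitely often. My first attempt is as follows.

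On each branch the vector field $x\mapsto f(x,\sigma(x))$ is semi-algebraic and continuous, so one can take a $C^1$ stratification of $\mathcal{B}$ compatible with these finitely many fields.

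Near a point of a stratum $\Sigma\subset\mathcal{B}$, the field is either transversal to $\Sigma$ or tangent to it on a semi-algebraic subset. In the transversal case the crossings are isolated, with a uniform dwell-time lower bound on compacts obtained from a positive lower bound on the normal velocity. The tangential case is handled by recursion on dimension: we select a branch along which the trajectory slides within $\Sigma$ and treat $\Sigma$ as the new ambient space.

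Since there are finitely many strata and finitely many branches (by Theorem~\ref{thm:connected_components_bound} and the finiteness of fibres), a compactness argument over $x([0,T])$ bounds the number of switches on $[0,T]$.

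For the infinite-horizon claim, I would use the same uniform bounds on the compact set containing the trajectory. I expect this to need an extra genericity or o-minimal argument (finiteness of the number of times a definable flow meets $\mathcal{B}$), and that is where I anticipate the argument is weakest.
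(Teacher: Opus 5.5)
\emph{The gaps in your proposal cannot be closed, because the statement is false as written.} Your Steps 1--2 are essentially the paper's construction. The paper also follows a continuous branch from Theorem~\ref{thm:selection} until an exit time, asserts that the exit point lies in $\partial\mathcal{D}_i\subset\mathcal{B}$, takes the left limit of $u$ (in $\mathcal{S}(x(t_1))$ by closedness of the graph), and restarts. It gets finiteness by asserting that $x([0,T])$ is a one-dimensional semi-algebraic set and hence meets $\mathcal{B}$ only finitely often. You are right to distrust this: solutions of polynomial ODEs are in general not semi-algebraic, and a semi-algebraic image (say a circle) says nothing about how often it is traversed. So the paper does not supply the missing step.

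\emph{First gap.} The faulty steps are ``the limit lies in the stratum boundary, hence in $\mathcal{B}$'' and, in Step 1, ``$x_0\notin\mathcal{B}$ implies $x_0$ lies in a full-dimensional stratum''. $\mathcal{B}$ comes from the coarse partition into $\mathcal{D}_{\mathrm{cont}},\mathcal{D}_{\mathrm{int}},\mathcal{D}_\partial$. Branches, however, end at boundaries of the finer Hardt strata, and these can lie in the interior of $\mathcal{D}_{\mathrm{int}}$.

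Take $n=m=1$, $K=[-2,2]$, $F(x,u)=(u^2-x)(u-\tfrac{3}{2})$, $f(x,u)=-(x+1)$. Since $\tfrac{3}{2}$ is an interior solution for every $x$ and every $\mathcal{S}(x)$ is finite, $\mathcal{D}_{\mathrm{int}}=\mathbb{R}$ and $\mathcal{B}=\emptyset$. From $x_0=1$, $u_0=1$ one has $x(t)=2e^{-t}-1$ for every control. For $t<\ln 2$, $\mathcal{S}(x(t))=\{-\sqrt{x(t)},\sqrt{x(t)},\tfrac{3}{2}\}$ consists of three separated points, so a continuous control must be $u=\sqrt{x(t)}\to 0$. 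For $t>\ln 2$, $\mathcal{S}(x(t))=\{\tfrac{3}{2}\}$. Every solution therefore jumps at $t=\ln 2$, where $x=0\notin\mathcal{B}$, although the trajectory is bounded.

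Likewise, curve selection in Step 1 does not make the dynamics enter the chosen stratum. With $K=[-1,1]$, $F(x,u)=x_1$, $f(x,u)=(u,1)$, every solution starting on $\{x_1=0\}$ stays in $\mathcal{B}$ with $u\equiv 0$.

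\emph{Second gap.} Step 3 fails, and its conclusions are false in general. A dwell-time bound needs the normal velocity bounded away from zero on a compact set, which fails at tangencies and rest points. Your crossing/sliding dichotomy excludes exactly the chattering that can occur. Moreover, the concatenation of Step 2 may stop at an accumulation time of switches. The blow-up alternative of Theorem~\ref{thm:maximal_existence} does not exclude this, since it concerns a Borel-selection solution, not your concatenated one.

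Both failures occur:
\begin{itemize}
\item \emph{Infinite horizon.} Take $m=1$, $K=[-1,1]$, $F(x,u)=x_1$, $f(x,u)=(-x_2,x_1)$. Off $\mathcal{B}=\{x_1=0\}$ one has $\mathcal{S}(x)=\{-\operatorname{sgn}x_1\}$, and the unit circle is compact and invariant. Every solution from $(1,0)$ switches at each $t=\tfrac{\pi}{2}+k\pi$, so no definability argument can give finitely many switches on $[0,\infty)$.
\item \emph{Finite horizon.} Take $K=[-1,1]^2$, $F(x,u)=(x_1+\tfrac{1}{4}x_2^2u_2,\,-x_2)$, $f(x,u)=(x_2,u_1)$. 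The box VI gives $u_2=\operatorname{sgn}x_2$ and $u_1=-\operatorname{sgn}(x_1+\tfrac{1}{4}x_2|x_2|)$, i.e.\ Fuller-type chattering. Crossings are transversal, and $|x_2|$ shrinks by the factor $1/\sqrt{3}$ between consecutive switches of $u_1$. Every trajectory from $x_0\neq 0$ reaches the origin at a finite time $T^*$ after infinitely many switches. Hence no regular solution extends past $T^*$, although the state stays bounded.
\end{itemize}

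To obtain a true statement you would at least have to replace $\mathcal{B}$ by the union of all lower-dimensional Hardt strata. You would also have to allow the solution to stop at an accumulation time of switches. Finally, recovering finiteness and $T_{\max}=\infty$ needs an extra hypothesis, for instance uniform transversality of the closed-loop fields to those strata along the trajectory.
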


\begin{proof}
The proof is based on the semi-algebraic structure of the solution mapping $\mathcal{S}$ and its domain $\mathcal{D}$. By Theorem~\ref{thm:structural_properties_general}, the set $\mathcal{D}$ admits a partition into $\mathcal{D}_{\mathrm{cont}}$, $\mathcal{D}_{\mathrm{int}}$, and $\mathcal{D}_{\partial}$, where $\mathcal{D}_{\mathrm{cont}}$ has dimension at most $n-1$, while $\mathcal{D}_{\mathrm{int}}$ and $\mathcal{D}_{\partial}$ are unions of full-dimensional semi-algebraic strata on which $\mathcal{S}$ is single-valued and continuous.

We begin the construction from the initial point $x_0$. If $x_0$ lies in a full-dimensional stratum $\mathcal{D}_i \subset \mathcal{D}_{\mathrm{int}} \cup \mathcal{D}_{\partial}$, Theorem~\ref{thm:selection} guarantees a continuous selection $\sigma_i$ defined on an open neighborhood of $x_0$. We then solve the ODE $\dot{x}(t) = f(x(t), \sigma_i(x(t)))$ with $x(0) = x_0$, yielding a $C^1$ trajectory $x(\cdot)$ and a continuous control $u(\cdot) = \sigma_i(x(\cdot))$ on a maximal subinterval $[0, t_1)$ contained within $\mathcal{D}_i$. If $x_0 \in \mathcal{D}_{\mathrm{cont}}$, the Curve Selection Lemma ensures that any absolutely continuous trajectory starting from $x_0$ immediately enters a full-dimensional stratum for $t > 0$ sufficiently small, so we may effectively start the construction from a point in $\mathcal{D}_{\mathrm{int}} \cup \mathcal{D}_{\partial}$.

At the exit time $t_1$, continuity of $x(\cdot)$ implies $x(t_1) \in \partial \mathcal{D}_i \subset \mathcal{B}$. Since the graph of $\mathcal{S}$ is closed, the left limit $u(t_1^-)$ belongs to $\mathcal{S}(x(t_1))$. We define the control at $t_1$ by this limit value, which may create a discontinuity if the subsequent selection differs. For $t > t_1$, the trajectory must enter another full-dimensional stratum $\mathcal{D}_j$, where a new continuous selection $\sigma_j$ is available, and the process repeats.

The key property ensuring finiteness of switches is the semi-algebraicity of the objects involved. The bifurcation set $\mathcal{B}$ is a semi-algebraic set of dimension strictly less than $n$. The state trajectory $x(\cdot)$, being a solution to a polynomial ODE with a piecewise semi-algebraic right-hand side, has a semi-algebraic image. A fundamental result in semi-algebraic geometry states that the intersection of a one-dimensional semi-algebraic set (the trajectory) with a lower-dimensional semi-algebraic set ($\mathcal{B}$) is finite on any compact time interval. Consequently, the set of times $\{ t \in [0, T] : x(t) \in \mathcal{B} \}$ is finite for any $T < T_{\max}$, which directly implies that the switching degree is finite on $[0, T]$.

The maximality of the interval $[0, T_{\max})$ follows from the same blow-up alternative as in Theorem~\ref{thm:maximal_existence}. If the trajectory is confined to a compact subset of $\mathcal{D}$ for all time, then $T_{\max} = \infty$, and the global finiteness of the switching degree follows from the fact that an infinite number of switches would imply an accumulation point of intersections with $\mathcal{B}$, contradicting the finiteness property of semi-algebraic curves.
\end{proof}

\begin{theorem}\label{thm:global_existence}
Consider the polynomial differential variational inequality system under the same assumptions as in Theorem~\ref{thm:maximal_existence}. Suppose there exists a compact set $\mathcal{C} \subset \mathcal{D}$ that is positively invariant in the sense that any regular solution starting in $\mathcal{C}$ remains in $\mathcal{C}$ for its entire lifetime. Then for every initial condition $x_0 \in \mathcal{C}$, the maximal regular solution $(x(\cdot), u(\cdot))$ provided by Theorem~\ref{thm:regular_solution_existence} is defined for all $t \geq 0$, and its switching degree over $[0, \infty)$ is finite.
\end{theorem}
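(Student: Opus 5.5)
The plan is to reduce the statement to Theorem~\ref{thm:regular_solution_existence}, using positive invariance of $\mathcal{C}$ to rule out finite-time escape. I will argue in three steps.

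First, fix $x_0 \in \mathcal{C}$ and any $u_0 \in \mathcal{S}(x_0)$. Since $\mathcal{C} \subset \mathcal{D}$, the point $x_0$ lies in $\mathcal{D}$, and $\mathcal{S}(x_0)$ is nonempty. Theorem~\ref{thm:regular_solution_existence} then supplies a maximal regular solution $(x(\cdot),u(\cdot))$ on $[0,T_{\max})$. By the invariance hypothesis, $x(t) \in \mathcal{C}$ for every $t \in [0,T_{\max})$.

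Second, I show $T_{\max} = \infty$. Suppose instead that $T_{\max} < \infty$. I would invoke the blow-up alternative of Theorem~\ref{thm:maximal_existence}, repeating its argument for the regular solution. Because $f$ is continuous on the compact set $\mathcal{C} \times K$, we have $\|\dot x\| \le M$ a.e., so $x(\cdot)$ is Lipschitz and $x(T_{\max}^-)$ exists. Since $\mathcal{C}$ is closed, this limit lies in $\mathcal{C} \subset \mathcal{D}$. Hence $\|x(t)\|$ stays bounded. Moreover, $\mathrm{dist}(x(t),\mathbb{R}^n\setminus\mathcal{D})$ stays bounded below on the compact trajectory closure, provided $\mathcal{C}$ sits inside $\mathcal{D}$ away from its boundary. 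At the limit point I restart the construction of Theorem~\ref{thm:regular_solution_existence} with initial state $x(T_{\max}^-)$ and initial control $u(T_{\max}^-) \in \mathcal{S}(x(T_{\max}^-))$. This limit exists by closedness of the graph and compactness of $K$, after passing to a subsequence if needed. Concatenating the two pieces gives a regular solution on a strictly longer interval. Only finitely many switches are added near $T_{\max}$, so regularity is preserved, and this contradicts maximality.

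Third, for finiteness of the switching degree on $[0,\infty)$, I would use the final clause of Theorem~\ref{thm:regular_solution_existence}, which applies because the trajectory remains in the compact set $\mathcal{C}$. Concretely, every switching time $t$ has $x(t) \in \mathcal{B}\cap\mathcal{C}$, and $\mathcal{B}$ is a closed semi-algebraic set of dimension less than $n$. Finiteness on each bounded interval $[0,T]$ comes from that theorem. On the infinite horizon, I would argue that the switching times form a discrete set. If there were infinitely many, I would aim to show they must accumulate in a way incompatible with the semi-algebraic finiteness property, exactly as at the end of the proof of Theorem~\ref{thm:regular_solution_existence}.

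I expect the main obstacle to be the infinite-horizon finiteness. Switching times could in principle escape to $t=\infty$ without accumulating in finite time. Compactness of $\mathcal{C}$ only gives an accumulation point of the states $x(t_k) \in \mathcal{B}\cap\mathcal{C}$, not of the times $t_k$. My first attempt would be to pass to a subsequence $x(t_k) \to \bar x \in \mathcal{B}\cap\mathcal{C}$ and appeal, as the paper does, to the semi-algebraic image of the trajectory having finitely many intersections with $\mathcal{B}$. Failing a rigorous version of that, I would settle for relying directly on the final assertion of Theorem~\ref{thm:regular_solution_existence}, whose hypothesis is exactly what invariance of $\mathcal{C}$ provides.
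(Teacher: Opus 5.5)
\textbf{Step three has a gap that cannot be closed, because the claimed finiteness is false.} Your first two steps follow the paper's proof: positive invariance of $\mathcal{C}$ plus the blow-up alternative of Theorem~\ref{thm:maximal_existence} gives $T_{\max}=\infty$. Your suspicion about step three is right, since compactness controls states, not times. A counterexample satisfies every stated hypothesis.

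Take $n=2$, $m=1$, $K=\{u\in\mathbb{R}: 1-u^2\ge 0\}=[-1,1]$, $F(x,u)=x_1-u$ and $f(x,u)=(x_2,-x_1)^\top$. Assumption~\ref{ass:core} holds; LICQ is satisfied since the gradient $-2u\neq 0$ at $u=\pm1$. Also $\mathcal{D}=\mathbb{R}^2$, and a direct check gives:
\begin{itemize}
\item $\mathcal{S}(x)=\{1\}$ for $x_1<-1$;
\item $\mathcal{S}(x)=\{-1\}$ for $x_1>1$;
\item $\mathcal{S}(x)=\{x_1,1,-1\}$ for $|x_1|<1$;
\item $\mathcal{S}(x)=\{1,-1\}$ for $|x_1|=1$.
\end{itemize}
Since $f$ does not depend on $u$, the circle $\mathcal{C}=\{x:\|x\|=2\}\subset\mathcal{D}$ is compact and positively invariant for every solution.

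Consider any interval $[a,b]$ on which $x_1$ increases strictly from below $-1$ to above $1$. Then $u(a)=1$ and $u(b)=-1$. Suppose $u$ were continuous on $[a,b]$, and set $t_1:=\sup\{t\in[a,b]:u(t)=1\}$. For $t$ slightly larger than $t_1$, the value $u(t)\neq 1$ is close to $1$, so $u(t)=x_1(t)$ with $|x_1(t)|<1$. Letting $t\searrow t_1$ gives $x_1(t_1)=1$, which contradicts strict increase. Hence every regular solution switches at least once per half period. Each one is global and has finitely many switches on each compact interval, but infinitely many on $[0,\infty)$.

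\textbf{Why the semi-algebraic argument fails.} The example shows exactly where both your argument and the paper's break down.
\begin{itemize}
\item The orbit's image is a circle, a one-dimensional semi-algebraic set meeting $\mathcal{B}=\{x:|x_1|=1\}$ in only four points. Yet the recurrent trajectory passes through those points at infinitely many times. Finiteness of $x([0,\infty))\cap\mathcal{B}$ in state space says nothing about the time set $\{t\ge 0: x(t)\in\mathcal{B}\}$.
\item Trajectories of polynomial ODEs need not have semi-algebraic images at all; for instance, $\dot x=(1,x_2)^\top$ produces the graph of an exponential.
\item Your fallback, invoking the last clause of Theorem~\ref{thm:regular_solution_existence}, is a formally valid deduction inside the paper. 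But that clause is the very assertion at issue, justified by the same accumulation argument, and the example refutes it too.
\end{itemize}
A correct statement needs an extra hypothesis, such as a finite horizon, no recurrence through $\mathcal{B}$, or an a priori bound on crossings.

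\textbf{A smaller gap in step two.} Concatenation contradicts maximality only if the switching times of the first piece do not accumulate at $T_{\max}$. Regularity on $[0,T_{\max})$ controls only compact subintervals of $[0,T_{\max})$, so this non-accumulation must be proved rather than assumed. The paper sidesteps the issue only by citing the blow-up alternative of Theorem~\ref{thm:maximal_existence}, which concerns a Borel-selection Carath\'{e}odory solution, not a regular one.
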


\begin{proof}
Let $x_0 \in \mathcal{C}$ and let $(x(\cdot), u(\cdot))$ be the maximal regular solution from Theorem~\ref{thm:regular_solution_existence}, defined on $[0, T_{\max})$. By the positive invariance assumption, $x(t) \in \mathcal{C}$ for all $t \in [0, T_{\max})$. Since $\mathcal{C}$ is compact and $\mathcal{C} \subset \mathcal{D}$, the trajectory cannot leave every compact subset of $\mathcal{D}$ in finite time. Therefore, the blow-up alternative stated in Theorem~\ref{thm:maximal_existence} forces $T_{\max} = \infty$, establishing global existence.

To prove the finiteness of the total switching degree, we note that the entire trajectory $x([0, \infty))$ is contained in the compact semi-algebraic set $\mathcal{C}$. The bifurcation set $\mathcal{B} \cap \mathcal{C}$ is also a semi-algebraic set of dimension at most $n-1$. The image of the trajectory $x([0, \infty))$ is a connected, one-dimensional semi-algebraic subset of $\mathcal{C}$. It is a standard consequence of the Cell Decomposition Theorem for semi-algebraic sets that such a one-dimensional set can intersect a lower-dimensional semi-algebraic set only finitely many times. Hence, the set $\{ t \geq 0 : x(t) \in \mathcal{B} \}$ is finite, which means the control $u(\cdot)$ has only finitely many discontinuities on $[0, \infty)$. This completes the proof.
\end{proof}

\section{The SOS-PDVIRK4 Algorithm}
\label{sec:algorithm}

The analysis in Section~\ref{Existence} shows that, under Assumption~\ref{ass:core}, a regular solution $(x(\cdot), u(\cdot))$ to the PDVI exists. Furthermore, the semi-algebraicity of the equilibrium graph $\mathrm{Graph}(\mathcal{S})$ implies that for almost every state $x$, the solution set $\mathcal{S}(x)$ is finite and admits locally continuous selections. These properties ensure that the static VI subproblem is well-posed with isolated solutions at almost every state $x_k$, making it amenable to exact computation.

Building on this, we present the \textbf{SOS-PDVIRK4} algorithm, a high-order numerical method for solving PDVIs as defined in Definition~\ref{def:pdvi}. 
The core idea of SOS-PDVIRK4 is to decouple the differential and algebraic components with an explicit fourth-order Runge--Kutta (RK4) scheme advances the state dynamics, while the Moment-SOS hierarchy handles the nonconvex, multivalued equilibrium map $x \mapsto \mathrm{SOL}(K, F(x, \cdot))$. At each time step, Nie’s finite convergence framework for polynomial variational inequalities~\cite{Nie2025} under our assumption guarantees exact recovery of all isolated KKT points, and hence all solutions to $\mathrm{VI}(x_k)$, in finitely many steps. This enables the algorithm to either select a specific branch (e.g., the one closest to the previous iterate) or explore multiple solution manifolds, yielding a robust and theoretically sound procedure for PDVIs.

\subsection{Computation of Equilibrium Controls via Nie's Framework}

Following the gap minimization paradigm~\cite{Nie2025}, solving $\mathrm{VI}(x)$ is equivalent to finding $u \in K$ such that the gap function
\[
\gamma(u) := \min_{z \in K} \langle F(x, u),\, z - u \rangle
\]
satisfies $\gamma(u) = 0$. Note that $\gamma(u) \leq 0$ for all $u \in K$, and equality holds if and only if $u$ solves $\mathrm{VI}(x)$. In particular, if $u$ is not a solution, then $\gamma(u) < 0$.  The key insight of Nie’s approach is that, when the KKT system of (VI$(x)$) has finitely many solutions, the set of all isolated VI solutions can be recovered exactly by solving a sequence of polynomial optimization problems (POPs) via sufficiently high-order Moment-SOS relaxations.

The numerical scheme requires solving two related problems at each parameter value $x$: (i) computing a single equilibrium $u \in \mathrm{SOL}(K, F(x,\cdot))$ for integration, and (ii) enumerating the full finite set $\mathcal{U}_{\mathrm{iso}}(x) = \mathrm{SOL}_{\mathrm{iso}}(K, F(x,\cdot))$ to enable branch selection under multiplicity.

Both tasks are realized through a cutting-plane strategy combined with objective perturbation, as formalized in Algorithms~\ref{alg:single_vi} and~\ref{alg:all_vi}.

\begin{algorithm}[htbp]
\caption{Compute a Single Solution of ($\mathrm{VI}(x)$)}
\label{alg:single_vi}
\begin{algorithmic}[1]
\Require State $x$, polynomial mapping $F(x; u)$, constraint polynomials $\{g_i\}_{i=1}^p$, relaxation order $d$
\Ensure A point $u^* \in \mathrm{SOL}(K,F)$ with gap $\gamma^* = 0$

\State Generate a random positive definite matrix $Q \succ 0$ and set $f(u) := [1; u]^\top Q [1; u]$
\State Initialize empty cutting-plane set $\mathcal{C} \gets \emptyset$
\Repeat
    \State Solve the Moment-SOS relaxation of order $d$ for: 
    \Statex $\displaystyle \min_{u} f(u) \quad \text{s.t. } \begin{cases} g_i(u) \geq 0, & i=1,\dots,p, \\ c(u) \geq 0, & \forall c \in \mathcal{C} \end{cases}$
    \State Extract candidate $u_{\text{cand}}$ from moment solution
    \State Compute gap: $\displaystyle \gamma = \min_{z \in K} (z - u_{\text{cand}})^\top F(x; u_{\text{cand}})$
    \If{$\gamma = 0$}
        \State \Return $u^* \gets u_{\text{cand}}$, $\gamma^* \gets \gamma$
    \EndIf
    \State Let $z^* = \operatorname{arg\,min}_{z \in X} (z - u_{\text{cand}})^\top F(x; u_{\text{cand}})$
    \State Add cutting plane: $\mathcal{C} \gets \mathcal{C} \cup \{ (z^* - u)^\top F(x; u) \}$
\Until{maximum iterations reached}
\State \Return $u^* \gets u_{\text{cand}}$, $\gamma^* \gets \gamma$ \Comment{fallback}
\end{algorithmic}
\end{algorithm}

\begin{algorithm}[htbp]
\caption{Enumerate All Isolated Solutions of ($\mathrm{VI}(x)$)}
\label{alg:all_vi}
\begin{algorithmic}[1]
\Require Same as Algorithm~\ref{alg:single_vi}, max solutions $N_{\max}$, exclusion tolerance $\delta_0 > 0$, distinctness threshold $\tau = 10^{-6}$
\Ensure Set $\mathcal{Y} = \{ u^{(1)}, \dots, u^{(N)} \}$ of distinct isolated solutions

\State Obtain $u^{(1)}$ via Algorithm~\ref{alg:single_vi}; set $f_{\text{prev}} \gets f(u^{(1)})$
\State $\mathcal{Y} \gets \{ u^{(1)} \}$, $\delta \gets \delta_0$
\For{$j = 2$ to $N_{\max}$}
    \Repeat
        \State Solve ($\mathrm{VI}(x)$) with additional constraint $f(u) \geq f_{\text{prev}} + \delta$
        \If{no solution found}
            \State $\delta \gets \delta / 2$
        \EndIf
    \Until{solution $u_{\text{new}}$ found \textbf{or} $\delta < 10^{-6}$}
    \If{$\delta < 10^{-6}$}
        \State \Return $\mathcal{Y}$
    \EndIf
    \If{$\| u_{\text{new}} - u^{(i)} \| > \tau$ for all $u^{(i)} \in \mathcal{Y}$}
        \State $\mathcal{Y} \gets \mathcal{Y} \cup \{ u_{\text{new}} \}$
        \State $f_{\text{prev}} \gets f(u_{\text{new}})$
    \Else
        \State $\delta \gets \delta / 2$ \Comment{Avoid duplicate; reduce gap}
    \EndIf
\EndFor
\State \Return $\mathcal{Y}$
\end{algorithmic}
\end{algorithm}

Algorithm~\ref{alg:single_vi} iteratively refines a candidate solution by adding linearizations of the VI condition as cutting planes, ensuring eventual feasibility. Algorithm~\ref{alg:all_vi} builds upon it by enforcing objective-level exclusion to discover new solutions, leveraging the fact that distinct isolated solutions attain distinct values under a generic quadratic objective $f(u)$. Both algorithms terminate with exact solutions when the SOS relaxation order $d$ is sufficiently large, as guaranteed by Nie’s finite convergence theorem~\cite[Thm.~3.3]{nie2013finite}.

\subsection{Time-Stepping with Adaptive Hybrid VI Solving}

We consider a dynamical system in which the state $x(t) \in \mathbb{R}^n$ evolves according to $\dot{x} = f(x, u)$, and the control $u \in \mathbb{R}^m$ is constrained to belong to the solution set of a parametric polynomial variational inequality 
$S(x) := \{ u \in K \mid \langle F(u; x), z - u \rangle \geq 0,\ \forall z \in K \}$.
Here, $K \subset \mathbb{R}^m$ is a basic closed semialgebraic set, and $F(\cdot; x)$ is polynomial in both $u$ and the parameter $x \in \mathbb{R}^n$.

By Theorem~\ref{thm:structural_properties_general}, the discontinuity set of the solution mapping,
$\Sigma := \{ x \in \mathbb{R}^p \mid S \text{ is not continuous at } x \}$,
is a proper semialgebraic subset of lower dimension. Consequently, along any smooth trajectory $x(t)$, the equilibrium path $u(t) \in S(x(t))$ is generically smooth but may undergo \emph{bifurcations}, \emph{merging}, or \emph{disappearance} precisely when crossing $\Sigma$. These events typically occur near the boundary $\partial G(K)$ of the feasible image or at degenerate constraint configurations.

To robustly simulate the non-smooth dynamics induced by structural changes in the solution set $\mathcal{S}(x)$, we propose \textbf{SOS-PDVIRK4}: a fourth-order Runge–Kutta integrator coupled with an adaptive hybrid solver for the polynomial variational inequality (PVI). The method is designed to respect the underlying semialgebraic geometry of $\mathcal{S}(x)$ and to resolve transitions across the discontinuity set $\Sigma = \partial G(K)$.

At each Runge–Kutta stage, all isolated solutions of the PVI are enumerated via the Moment–SOS hierarchy. The solver employs a regime-dependent formulation:
in the \emph{generic region} ($x \notin \Sigma$), where the linear independence constraint qualification (LICQ) holds, we apply the Lagrange Multiplier Elimination (LME) method~\cite{Nie2025}, which reduces the KKT system to a pure polynomial system in $u$, enabling efficient solution enumeration at low SOS relaxation orders.
In contrast, when $x$ lies near $\Sigma$ where LICQ may fail or interior and boundary solutions coexist the LME formulation becomes ill-conditioned. In this \emph{singular region}, we switch to a full primal-dual KKT formulation that retains both state and multiplier variables, ensuring numerical well-posedness at degenerate points.

Proximity to $\Sigma$ is detected either analytically, when an explicit semialgebraic description of $\Sigma$ is available (e.g., via quantifier elimination for low-degree problems), or numerically, by monitoring abrupt changes in the cardinality or norm of $\mathcal{S}(x(t))$ over recent time steps. If $\operatorname{dist}(x, \Sigma) < \delta$ for a prescribed tolerance $\delta > 0$, the step size $h$ is reduced by a factor of $10$ to prevent overshooting bifurcation events and to resolve emerging solution branches accurately.

Finally, to select a single-valued control trajectory from the finite set $\mathcal{Y} = \mathcal{S}(x(t))$, we enforce temporal continuity via the selection rule
\begin{equation}
u_{\text{new}} = \arg\min_{u \in \mathcal{Y}} \| u - u_{\text{prev}} \|^2,
\tag{SEL}
\end{equation}
which picks the solution branch closest to the previous iterate. This minimal-perturbation criterion ensures compatibility with dynamical stability and avoids spurious jumps at bifurcation points.

The RK4 scheme is applied with equilibrium consistency which at each of the four stages, the intermediate state is computed, the corresponding PVI is solved using the hybrid strategy, and the control is selected via (SEL). The complete simulation procedure is summarized in Algorithm~\ref{alg:sos-rk4}.

\begin{algorithm}[htbp]
\caption{SOS-PDVIRK4 with Adaptive Hybrid VI Solver}
\label{alg:sos-rk4}
\begin{algorithmic}[1]
\Require 
    Initial state $x_0$, time horizon $T$, nominal step size $h_0$, 
    SOS relaxation order $d$, boundary threshold $\delta > 0$
\Ensure 
    Set of valid trajectories $\{(x_k^{(j)}, u_k^{(j)})\}$ for each initial branch $j$

\State $\mathcal{u}_0 \gets \Call{EnumerateAllVI}{x_0, d}$
\If{$\mathcal{Y}_0 = \emptyset$} \Return $\emptyset$ \EndIf

\State $\mathcal{T} \gets \emptyset$
\For{each $u_0^{(j)} \in \mathcal{Y}_0$}
    \State $x \gets x_0$, \ $u \gets u_0^{(j)}$, \ $t \gets 0$, \ $h \gets h_0$, \ $\texttt{success} \gets \texttt{true}$
    \While{$t < T$ and \texttt{success}}
        \State $h \gets \min(h, T - t)$
        
        \Statex \textbf{// Detect proximity to $\Sigma$}
        \If{\Call{IsNearBoundary}{$x, \delta$}}
            \State $h \gets h / 10$ \Comment{Refine near bifurcations}
        \EndIf
        
        \Statex \textbf{// RK4 stages with hybrid VI solving and (SEL)}
        \State $x_1 \gets x$, \quad $\mathcal{Y}_1 \gets \Call{EnumerateAllVI}{x_1, d}$, \quad \textbf{if} $\mathcal{Y}_1 = \emptyset$ \textbf{then break}
        \State $u_1 \gets \arg\min_{u \in \mathcal{Y}_1} \| u - u \|^2$
        
        \State $x_2 \gets x + \tfrac{h}{2} f(x_1, u_1)$, \quad $\mathcal{Y}_2 \gets \Call{EnumerateAllVI}{x_2, d}$, \quad \textbf{if} $\mathcal{Y}_2 = \emptyset$ \textbf{then break}
        \State $u_2 \gets \arg\min_{u \in \mathcal{Y}_2} \| u - u_1 \|^2$
        
        \State $x_3 \gets x + \tfrac{h}{2} f(x_2, u_2)$, \quad $\mathcal{Y}_3 \gets \Call{EnumerateAllVI}{x_3, d}$, \quad \textbf{if} $\mathcal{Y}_3 = \emptyset$ \textbf{then break}
        \State $u_3 \gets \arg\min_{u \in \mathcal{Y}_3} \| u - u_2 \|^2$
        
        \State $x_4 \gets x + h f(x_3, u_3)$, \quad $\mathcal{Y}_4 \gets \Call{EnumerateAllVI}{x_4, d}$, \quad \textbf{if} $\mathcal{Y}_4 = \emptyset$ \textbf{then break}
        \State $u_4 \gets \arg\min_{u \in \mathcal{Y}_4} \| u - u_3 \|^2$
        
        \Statex \textbf{// Update state}
        \State $x_{\text{next}} \gets x + \tfrac{h}{6} \big( f(x_1,u_1) + 2f(x_2,u_2) + 2f(x_3,u_3) + f(x_4,u_4) \big)$
        \State $\mathcal{Y}_{\text{next}} \gets \Call{EnumerateAllVI}{x_{\text{next}}, d}$, \quad \textbf{if} $\mathcal{Y}_{\text{next}} = \emptyset$ \textbf{then break}
        \State $u_{\text{next}} \gets \arg\min_{u \in \mathcal{Y}_{\text{next}}} \| u - u_4 \|^2$
        
        \State Store $(x, u)$; \ $x \gets x_{\text{next}}$; \ $u \gets u_{\text{next}}$; \ $t \gets t + h$
    \EndWhile
    
    \If{\texttt{success}} Append trajectory to $\mathcal{T}$ \EndIf
\EndFor
\State \Return $\mathcal{T}$
\end{algorithmic}
\end{algorithm}

Each call to $\textsc{EnumerateAllVI}$ implements the adaptive hybrid strategy described above. Under generic conditions (finitely many KKT points), Nie’s finite convergence theorem~\cite[Thm.~3.3]{nie2013finite} guarantees exact recovery of all isolated solutions when the SOS order $d$ is sufficiently large. Thus, the selection rule (SEL) operates on the true solution set, ensuring mathematical fidelity of the simulated trajectory even in the presence of complex bifurcations induced by the non-smooth structure of $S(x)$.

\begin{remark}[Initialization on $\Sigma$]
If $x_0 \in \Sigma$ (e.g., on $\partial G(K)$ with degenerate active constraints), $S(x_0)$ may be infinite or form a manifold. Our framework assumes $x_0 \notin \Sigma$ or that such degeneracies are regularized a priori. Handling fully degenerate equilibria remains beyond the scope of the current finite-solution paradigm.
\end{remark}

\subsection{Convergence Analysis}
\label{subsec:convergence}

Our \texttt{SOS-PDVIRK4} algorithm provides rigorous guarantees on both \emph{feasibility} and \emph{numerical accuracy} that the computed $u_k$ is always a feasible VI solution, and the state trajectory $\{x_k\}$ converges to the true Carath\'{e}odory solution with a quantifiable error bound.

The convergence analysis relies on the state trajectory remaining within the parameter feasibility set $\mathcal{D}$, where the polynomial variational inequality's solution mapping $\mathcal{S}(x)$ possesses a well-behaved structure that enables exact recovery via the Moment--SOS hierarchy. Local fourth-order convergence is established under the following assumptions on the exact solution $(x^*(\cdot), u^*(\cdot))$.

\begin{enumerate}[label=\textup{(B\arabic*)}, widest=(B4), left=0pt, align=left]
    \item \textbf{Trajectory confinement}. $x^*(t) \in \mathcal{D}$ for all $t \in [0,T]$.

    \item \textbf{Manifold structure}. There exists an open neighborhood $\mathcal{N} \supset x^*([0,T])$ such that for every $x \in \mathcal{N} \cap \mathcal{D}$, the linear independence constraint qualification holds at each $u \in \mathcal{S}(x)$. Consequently, $\mathrm{Graph}(\mathcal{S}) \cap ((\mathcal{N} \cap \mathcal{D}) \times \mathbb{R}^m)$ is a finite union of disjoint $C^1$ manifolds.

    \item \textbf{Isolated equilibria and uniform SOS exactness}. For each $x \in \mathcal{N} \cap \mathcal{D}$, the set $\mathcal{S}(x)$ is finite. By Nie's theorem~\cite[Thm.~3.3]{nie2013finite}, there exists a uniform relaxation order $d$ such that the Moment--SOS hierarchy of order $d$ recovers $\mathcal{S}(x)$ exactly for all $x \in \mathcal{N} \cap \mathcal{D}$.

    \item \textbf{Consistent branch selection}. The algorithm employs the temporal continuity rule $u_k = \arg\min_{u \in \mathcal{S}(x_k)} \|u - u_{k-1}\|$. Under (B2)--(B3), there exists $h_0 > 0$ such that for any $h < h_0$, this rule uniquely selects a sequence $\{(x_k, u_k)\}$ lying on the same manifold component as the true solution $\{(x^*(t_k), u^*(t_k))\}$.
\end{enumerate}

Under (B2)--(B4), the discrete-time closed-loop vector field $f_{\mathrm{cl}}^h(x) := f(x, u_h(x))$ is continuously differentiable in a neighborhood of the true trajectory for sufficiently small step size $h$. This permits the application of classical ordinary differential equation convergence theory.

\begin{theorem}[Local Fourth-Order Convergence]
\label{thm:convergence}
Suppose (B1)--(B4) hold. Then, there exist constants $h_0 > 0$ and $C > 0$ such that for any step size $h \in (0, h_0)$, the numerical solution $\{x_k\}$ from Algorithm~\ref{alg:sos-rk4} satisfies
\[
\max_{0 \leq k \leq N} \|x_k - x^*(t_k)\| \leq C h^4,
\quad t_k = kh.
\]
\end{theorem}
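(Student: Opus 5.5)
The plan is to reduce Algorithm~\ref{alg:sos-rk4}, along the true trajectory, to the classical explicit RK4 method applied to a smooth autonomous ODE. Then the standard consistency-plus-stability argument gives the $O(h^4)$ global bound. The reduction has three steps, carried out in order.

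First, by (B3) every call to \textsc{EnumerateAllVI} at a state $x\in\mathcal{N}\cap\mathcal{D}$ returns exactly $\mathcal{S}(x)$. So the computed $u_k$ and the stage controls $u_1,\dots,u_4$ are genuine VI solutions, and no SOS approximation error enters. Next, by (B2) the component $M$ of $\mathrm{Graph}(\mathcal{S})$ containing $(x^*(t),u^*(t))$ is a $C^1$ manifold near the trajectory. Finiteness of the fibers (B3), combined with LICQ and the implicit function theorem applied to the KKT system (with multipliers eliminated via the LME, or kept in primal-dual form), makes $M$ locally the graph of a $C^1$ map $\sigma:\mathcal{U}\to K$. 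Here $\mathcal{U}$ is a tubular neighborhood of $x^*([0,T])$, and $\sigma(x^*(t))=u^*(t)$. I would take $\mathcal{U}$ to have radius $r>0$ by compactness of $x^*([0,T])$. I would also ask for slightly more regularity, namely $\sigma\in C^4$, since fourth order requires $f_{\mathrm{cl}}:=f(\cdot,\sigma(\cdot))\in C^4$. Because $f$ and $F$ are polynomial, the implicit function theorem gives $C^\infty$ (indeed Nash) regularity once the KKT Jacobian is nonsingular, so I would argue the upgrade this way. The true state then solves $\dot x^*=f_{\mathrm{cl}}(x^*)$.

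Second, I would show that, as long as the iterates stay in $\mathcal{U}$, the selection rule picks $\sigma$ at every stage. The other branches of $\mathcal{S}(x)$ over the compact closure of a smaller tube lie at a uniform positive distance $\rho$ from $\sigma(x)$, by finiteness and closedness of the graph, which is exactly what (B4) encodes. Each stage point differs from the previous one by $O(h)$, because $f$ is bounded on the tube times $K$. By continuity of $\sigma$, the previous control is then within $o(1)<\rho/2$ of $\sigma$ at the new stage, so (SEL) returns $\sigma(x_i)$. Consequently one step of the algorithm coincides exactly with one classical RK4 step $\Phi_h$ for $\dot x=f_{\mathrm{cl}}(x)$.

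Third, I would run the standard convergence argument, with an induction to keep the iterates in $\mathcal{U}$. The local truncation error is $\|x^*(t_{k+1})-\Phi_h(x^*(t_k))\|\le C_1h^5$, by Taylor expansion using $f_{\mathrm{cl}}\in C^4$ on the compact tube. Lipschitz stability gives $\|\Phi_h(x)-\Phi_h(y)\|\le(1+Lh)\|x-y\|$. A discrete Gronwall argument then yields $\|x_k-x^*(t_k)\|\le C_1h^4(e^{LT}-1)/L=:Ch^4$. Choosing $h_0$ so that $Ch_0^4<r/2$ closes the induction, since $x_k$ never leaves the tube and the second step stays valid.

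The main obstacle I expect is the regularity step. (B2) only asserts $C^1$ manifolds, and a switch of the active set along $x^*$ would destroy smoothness of $\sigma$ and limit the order. I would handle this by observing that (B2)--(B4) require the whole trajectory to stay on one manifold component with fixed structure. On such a component the active set is locally constant, because a change of the active set is a point of $\mathcal{B}$ where the component structure changes. With the active set fixed, analyticity of the reduced KKT map gives $\sigma\in C^\infty$. If this fails, the fallback is to state the bound with whatever order the regularity of $\sigma$ supports.
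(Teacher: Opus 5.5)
Your reduction (exact recovery from (B3), branch tracking from (B4), then classical RK4 theory for $\dot x=f(x,\sigma(x))$) follows the same route as the paper's proof. You are also right that the paper's last step is unsound: it runs RK4 on a closed-loop field that is only $C^1$ and still claims $O(h^4)$, whereas the $O(h^5)$ local error needs $f_{\mathrm{cl}}\in C^4$.

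\textbf{The gap.} Your upgrade of $\sigma$ to $C^4$ does not follow from (B1)--(B4). The implicit function theorem needs the $u$-Jacobian of the active KKT (or LME-reduced) system to be nonsingular. LICQ, finite fibers and a $C^1$ manifold graph do not give this. They do not even make $\sigma$ differentiable, because the graph may have a vertical tangent.

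\textbf{A counterexample.} Take $m=1$, $K=\{u:1-u^2\ge 0\}$, $F(x,u)=u^3-x_1$, $f(x,u)=(1,u)^\top$, $x_0=(-\tfrac12,0)^\top$, $T=1$.
\begin{itemize}
\item For $|x_1|<1$ one has $\mathcal{S}(x)=\{x_1^{1/3}\}$. LICQ holds, fibers are singletons, and the graph $\{x_1=u^3\}$ is a smooth manifold.
\item The trajectory never meets $\mathcal{B}=\{|x_1|=1\}$.
\item Hence (B1)--(B4) hold, with $\mathcal{N}=\{|x_1|<3/4\}$ and (B3) taken as an assumption on the solver.
\end{itemize}
Yet $f_{\mathrm{cl}}$ is not even Lipschitz at $x_1=0$. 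Since $f$ does not depend on $x_2$, Algorithm~\ref{alg:sos-rk4} reduces in the $x_2$-component to composite Simpson quadrature of $(t-\tfrac12)^{1/3}$, and the local errors simply add. For $h=1/(2M)$, the step starting at $t=\tfrac12$ alone contributes an error of about $0.054\,h^{4/3}$. So the statement is false under (B1)--(B4) as written. Your fallback of stating whatever order $\sigma$ supports is not a proof of it.

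\textbf{The active-set argument.} Your second justification has the same defect. Nothing in (B1)--(B4) keeps $x^*$ away from points where the active set changes, and such a change can occur inside a single $C^1$ component. For instance, $K=\{u:u(1-u)\ge 0\}$ with $F(x,u)=u-x_1^3$ gives $\sigma(x)=\max\{0,x_1^3\}$ near $x_1=0$. This is a $C^2$ graph with LICQ and singleton fibers, yet the active set switches at $x_1=0$ and strict complementarity fails there. So ``on one component the active set is locally constant'' is an extra assumption, not a consequence.

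\textbf{What is missing.} The missing ingredient is a hypothesis rather than a lemma. For example: along $x^*([0,T])$, each $u^*(t)$ has a fixed active set with strict complementarity and a nonsingular $u$-Jacobian of the active KKT system. Alternatively, assume directly that $u^*=\sigma(x^*)$ for some $\sigma\in C^4$ on a tube around $x^*([0,T])$. With that added, your three steps give a complete proof, considerably more careful than the paper's:
\begin{itemize}
\item an analytic $\sigma$ from the implicit function theorem;
\item uniform branch separation, so that (SEL) returns $\sigma$ at every stage;
\item $O(h^5)$ local error plus discrete Gronwall, with the induction keeping both iterates and stage points inside the tube.
\end{itemize}
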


\begin{proof}
By (B1)--(B2), the exact trajectory lies on a single $C^1$ manifold $\mathcal{M} \subset \mathrm{Graph}(\mathcal{S})$. From (B3), for sufficiently small $h$, all RK4 stage predictors stay within $\mathcal{N} \cap \mathcal{D}$, and the SOS subproblem recovers $\mathcal{S}(\tilde{x})$ exactly. Assumption (B4) ensures the continuity rule selects the unique point in $\mathcal{S}(x_k) \cap \mathcal{M}$, making $u_h(x)$ a $C^1$ function along the numerical path. Thus, the algorithm implements the classical fourth-order Runge--Kutta method on a smooth $C^1$ system, yielding the global $\mathcal{O}(h^4)$ error bound~\cite{hairer1993solving}.
\end{proof}

This convergence is uniform over any compact subset of a single stratum in the semialgebraic partition from Theorem~\ref{thm:structural_properties_general}. Crucially, the framework is fail-safe: if a prediction leaves $\mathcal{D}$, the SOS subproblem either becomes infeasible or exhibits a duality gap, providing a computable certificate of failure.

\begin{remark}[On Asymptotic SOS Convergence]
If (B3) fails, one may use the asymptotic convergence of Lasserre’s hierarchy~\cite{Lasserre2001}. The error then includes a controllable bias $\varepsilon(r) \to 0$ as the relaxation order $r \to \infty$, with fourth-order temporal convergence holding up to this bias.
\end{remark}

\section{Numerical Experiment}
\label{sec:numerics}

We now demonstrate the capabilities of SOS-PDVIRK4 on a physically motivated polynomial differential variational inequality (PDVI) arising from a cyber-physical control system with \emph{nonconvex, annular actuation constraints}. This example captures key challenges in modern engineering applications such as fluidic actuators with minimum flow requirements and maximum power limits, or electromagnetic systems operating within a bounded but non-idling operational envelope where the admissible control commands are confined to a nonconvex set and the associated equilibrium mapping is inherently multivalued.

\subsection{Model Derivation: A Cyber-Physical System with Annular Actuation Constraints}
\label{subsec:model_derivation}

We consider a cyber-physical control system consisting of a second-order nonlinear plant and an actuation subsystem subject to nonconvex physical constraints. The plant state is denoted by $x = (x_1, x_2)^\top \in \mathbb{R}^2$, where $x_1$ is a generalized position and $x_2 = \dot{x}_1$ its velocity. The control input $u = (u_1, u_2)^\top \in \mathbb{R}^2$ is generated by a high-level policy but must respect the actuator's operational limits.

The actuators cannot operate at zero power (e.g., due to minimum flow rates or static friction) and are capped by a maximum power capacity. This is modeled by the annular feasible set
\[
K := \left\{ u \in \mathbb{R}^2 : 1 \leq \|u\|^2 \leq 4 \right\},
\]
which is compact, nonconvex, and has a disconnected boundary $\partial K$ comprising two concentric circles. This geometry is a primary source of solution multiplicity in the associated variational inequality.

The coordination between plant state and actuator command is encoded via a polynomial variational inequality (VI). We define the VI operator $F: \mathbb{R}^2 \times \mathbb{R}^2 \to \mathbb{R}^2$ as
\[
F(x, u) = 
\begin{bmatrix}
u_1^3 - x_1 \\
u_1 u_2^2 - x_2
\end{bmatrix}.
\]
The first component models a cubic force–position relationship in the first actuator channel, regulated to track $x_1$. The second captures a cross-channel modulation effect, where the influence on velocity depends on $u_1$ scaled by $u_2^2$, representing gain-scheduling or energy coupling between actuators.

At each time $t \geq 0$, the admissible command $u(t)$ satisfies the VI:
\begin{equation}
    \langle F(x(t), u(t)), z - u(t) \rangle \geq 0, \quad \forall z \in K.
    \label{eq:vi_model}
\end{equation}
This selects a $u(t) \in K$ that locally minimizes the discrepancy encoded by $F(x(t), \cdot)$ over the feasible set.

The plant evolves according to the polynomial dynamics
\begin{equation}
    \dot{x}(t) = f(x(t), u(t)) = 
    \begin{bmatrix}
        x_2(t) - u_1(t) - 2 \\
        -\delta x_2(t) - \alpha x_1(t) - \beta x_1(t)^3 + \gamma_1 u_1(t) x_2(t) + \gamma_2 u_2(t) x_1(t)
    \end{bmatrix},
    \label{eq:dyn_model}
\end{equation}
with parameters $\alpha = 1$, $\beta = 0.5$, $\delta = 0.1$, $\gamma_1 = 0.8$, and $\gamma_2 = 0.6$. The first equation enforces a velocity offset relative to the control input, while the second describes a Duffing-type oscillator with nonlinear stiffness ($\beta x_1^3$) and state-dependent forcing terms modulated by $u$.

Because $F$ is non-monotone in $u$ and $K$ is nonconvex, the VI~\eqref{eq:vi_model} may admit multiple isolated solutions for a given $x$. Consequently, the resulting polynomial differential variational inequality (PDVI) comprising~\eqref{eq:vi_model} and~\eqref{eq:dyn_model} exhibits multi-modal, path-dependent behavior, making it a challenging benchmark for numerical solvers such as the SOS-PDVIRK4 method proposed in this work.

\subsection{Numerical Experiments: Capturing the Full Solution Landscape}

To comprehensively validate our SOS-PDVIRK4 framework, we conduct a series of numerical experiments that demonstrate its three core capabilities: (i) the ability to capture the complete set of solution branches for a polynomial differential variational inequality (PDVI), (ii) robust numerical convergence under mesh refinement, and (iii) precise tracking of the underlying parameterized variational inequality's (PVI) solution structure through monitoring the cardinality of $\mathcal{S}(x(t))$.

We integrate the system over the time interval $[0, T]$ with $T = 1$, using a uniform temporal discretization with step size $h = 10^{-2}$ ($N = 100$ steps). At each time step, the polynomial variational inequality (PVI) is approximated via a sum-of-squares (SOS) relaxation of order $d = 5$. The initial state is fixed at $x_0 = (2,\,1)^\top$.

To capture the full multiplicity of equilibria at $t=0$, we launch three independent simulations, each initialized from a distinct equilibrium control $u_0^{(i)} \in \mathcal{S}(x_0)$, where $\mathcal{S}(x_0)$ denotes the solution set of the PVI at $x_0$. These initial controls are given by
\[
\begin{aligned}
u^{(1)}_0 &\approx (\,0.969244,\ -1.749448\,)^\top, \\
u^{(2)}_0 &\approx (\,1.259921,\ \phantom{-}0.890899\,)^\top, \\
u^{(3)}_0 &\approx (\,1.259921,\ -0.890899\,)^\top,
\end{aligned}
\]
and satisfy the annular constraint $\|u^{(i)}_0\|^2 \in [1,\,4]$, i.e., $u^{(i)}_0 \in K := \{ u \in \mathbb{R}^2 : 1 \leq \|u\|^2 \leq 4 \}$. Each $u^{(i)}_0$ is a numerically verified solution of the PVI at $t = 0$, thereby providing admissible initial conditions for the dynamical evolution.

To accurately resolve the critical transitions where solution branches merge or bifurcatetypically occurring near the boundary $\partial G(K)$ we employ an adaptive strategy. In these sensitive regions, the time step is refined to $h_{\text{fine}} = h/10 = 0.001$ (i.e., $N=1000$). This enhanced resolution allows the algorithm to make more informed selections between coexisting equilibria based on the temporal continuity rule, thereby providing a faithful representation of the system's dynamics at structural boundaries.

Our results are summarized in Figures~\ref{fig:all_branches}, \ref{fig:vi_count}, and~\ref{fig:conv_branch1}, ~\ref{fig:conv_branch2}, \ref{fig:conv_branch3}.

\begin{figure}[htbp]
    \centering
    \includegraphics[width=0.95\textwidth]{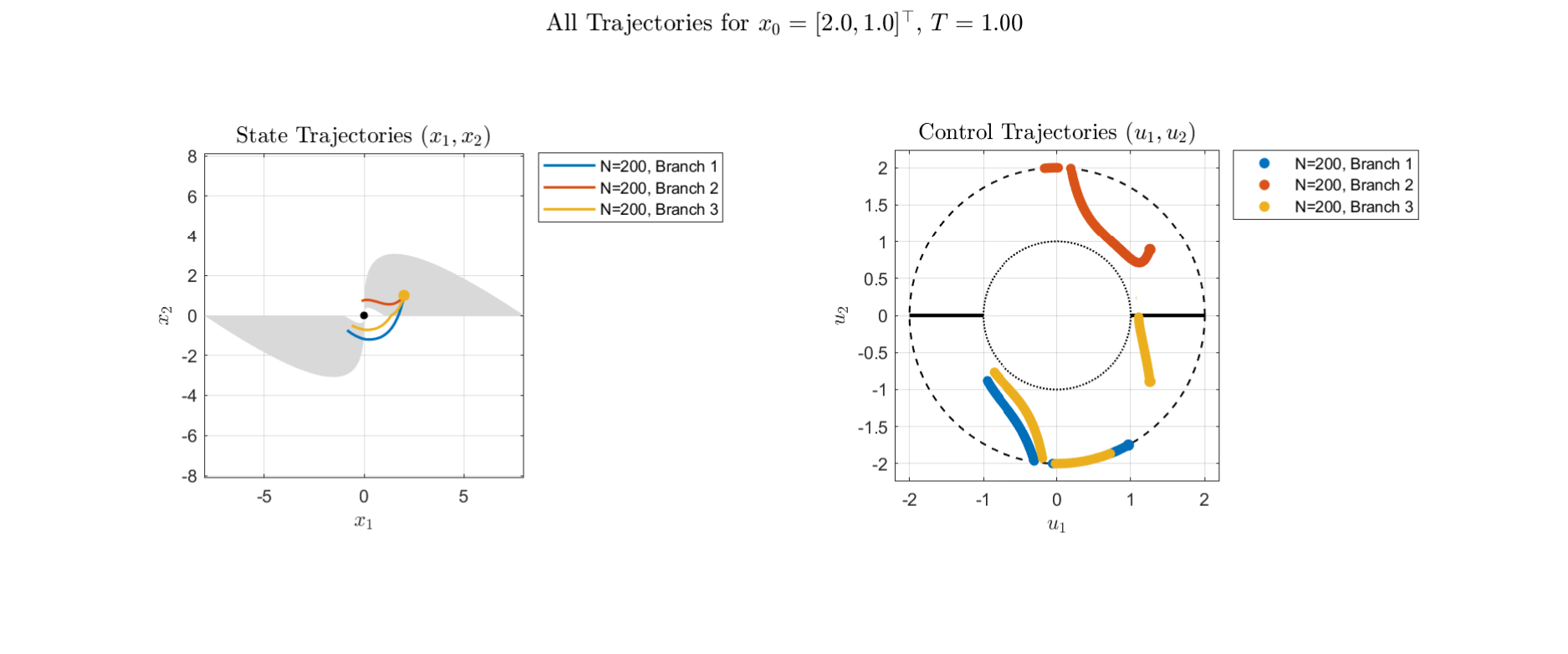}
    \caption{
        Complete solution landscape for the PDVI system with $\Delta t = 0.005$ ($N=200$).
        \textbf{Left:} State trajectories $(x_1(t), x_2(t))$ for all three identified branches.
        \textbf{Right:} Corresponding control trajectories $(u_1(t), u_2(t))$ confined to the annular set $K$. This figure demonstrates the algorithm's capability to capture the full set of admissible dynamical paths from a single initial state $x_0 = (2, 1)^\top$.
    }
    \label{fig:all_branches}
\end{figure}

Figure~\ref{fig:all_branches} provides numerical validation of the theoretical existence results for regular solutions established in Section~\ref{Existence}. Starting from the same initial state $x_0 = (2, 1)^\top$, our algorithm identifies three distinct solution branches, each corresponding to a different initial control selection $u_0^{(i)} \in \mathcal{S}(x_0)$ for $i = 1, 2, 3$.

Each computed trajectory pair $(x^{(i)}(\cdot), u^{(i)}(\cdot))$ constitutes a \textbf{regular solution} in the sense of Definition~\ref{def:switching_degree}. Specifically, the control trajectories $u^{(i)}(\cdot)$ are piecewise continuous with a finite number of discontinuities, confirming that the switching degree $\sigma(x^{(i)}(\cdot), u^{(i)}(\cdot))$ is finite on the compact time interval $[0, T]$. For the displayed time horizon, we observe switching degrees of $\sigma^{(1)} = 1$, $\sigma^{(2)} = 1$, and $\sigma^{(3)} = 3$, respectively.

The switching events occur precisely when the state trajectory $x^{(i)}(t)$ intersects the bifurcation set $\mathcal{B}$, as predicted by Theorem~\ref{thm:regular_solution_existence}(iii). This geometric correspondence between state-space crossings and control discontinuities validates our theoretical characterization of the switching mechanism.

Furthermore, the existence of multiple distinct regular solutions from the same initial state $x_0$ demonstrates the intrinsic multistability of the PDVI system, which arises from the non-singleton nature of the solution mapping $\mathcal{S}(x_0)$. This experimental observation directly supports the constructive proof strategy outlined in Theorem~\ref{thm:regular_solution_existence}, where different initial selections $u_0 \in \mathcal{S}(x_0)$ lead to different piecewise continuous feedback laws and consequently to different dynamical evolutions.

The finiteness of switching events in all computed trajectories provides strong numerical evidence for the semi-algebraic geometric property that the intersection of the one-dimensional semi-algebraic trajectory $x^{(i)}([0,T])$ with the lower-dimensional bifurcation set $\mathcal{B}$ is indeed finite, as guaranteed by the theoretical framework.

\begin{figure}[htbp]
    \centering
    \includegraphics[width=0.6\textwidth]{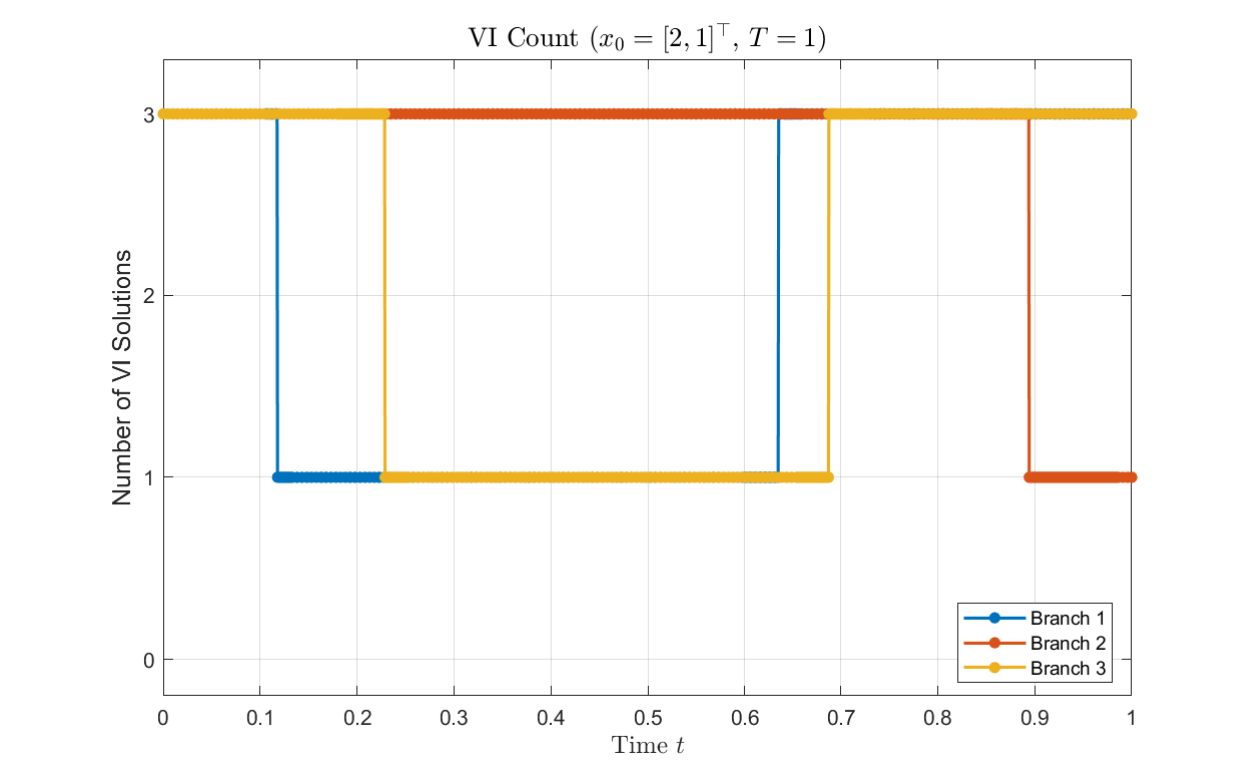}
    \caption{
        Temporal evolution of the cardinality $|\mathcal{S}(x(t))|$ for the parameterized variational inequality (PVI), with $x_0 = (2,1)^\top$, $T=1$, and $N=100$ uniform steps. Three distinct state trajectories corresponding to the three initial equilibrium branches at $t=0$ are tracked simultaneously. 
    }
    \label{fig:vi_count}
\end{figure}

Figure~\ref{fig:vi_count} shows the temporal evolution of the cardinality $|\mathcal{S}(x(t))|$ for three trajectories of the polynomial variational inequality (PVI), all initialized at $x_0 = (2,1)^\top$ with distinct equilibrium controls $u_0^{(i)} \in \mathcal{S}(x_0)$ ($i=1,2,3$). As each state trajectory crosses the boundary $\partial G(K) = \Sigma$, the codimension-one discontinuity set of $\mathcal{S}(\cdot)$ that is the number of isolated solutions changes abruptly.

Specifically:
\begin{itemize}
    \item \textbf{Branch 1} ($u_0^{(1)} \approx (0.969,\,-1.749)^\top$): $|\mathcal{S}| = 3$ on $[0,0.118)$, drops to 1 after exiting $G(K)$, and returns to 3 upon re-entry at $t \approx 0.635$.
    \item \textbf{Branch 2} ($u_0^{(2)} = (1.260,\,0.891)^\top$): remains in $G(K)$ until $t \approx 0.894$, then exits permanently, leaving $|\mathcal{S}| = 1$ thereafter.
    \item \textbf{Branch 3} ($u_0^{(3)} \approx (1.260,\,-0.891)^\top$): exits $G(K)$ at $t \approx 0.229$ ($|\mathcal{S}| = 1$), and re-enters at $t \approx 0.688$, restoring $|\mathcal{S}| = 3$.
\end{itemize}

These transitions align precisely with the theoretical prediction that structural changes in $\mathcal{S}(x)$ occur only across $\Sigma = \partial G(K)$. The \texttt{SOS-PDVIRK4} algorithm robustly tracks all branches and resolves the induced discontinuities without spurious oscillations, thereby certifying the geometric partition of the state space.

\begin{figure}[htbp]
    \centering
    \includegraphics[width=1\textwidth]{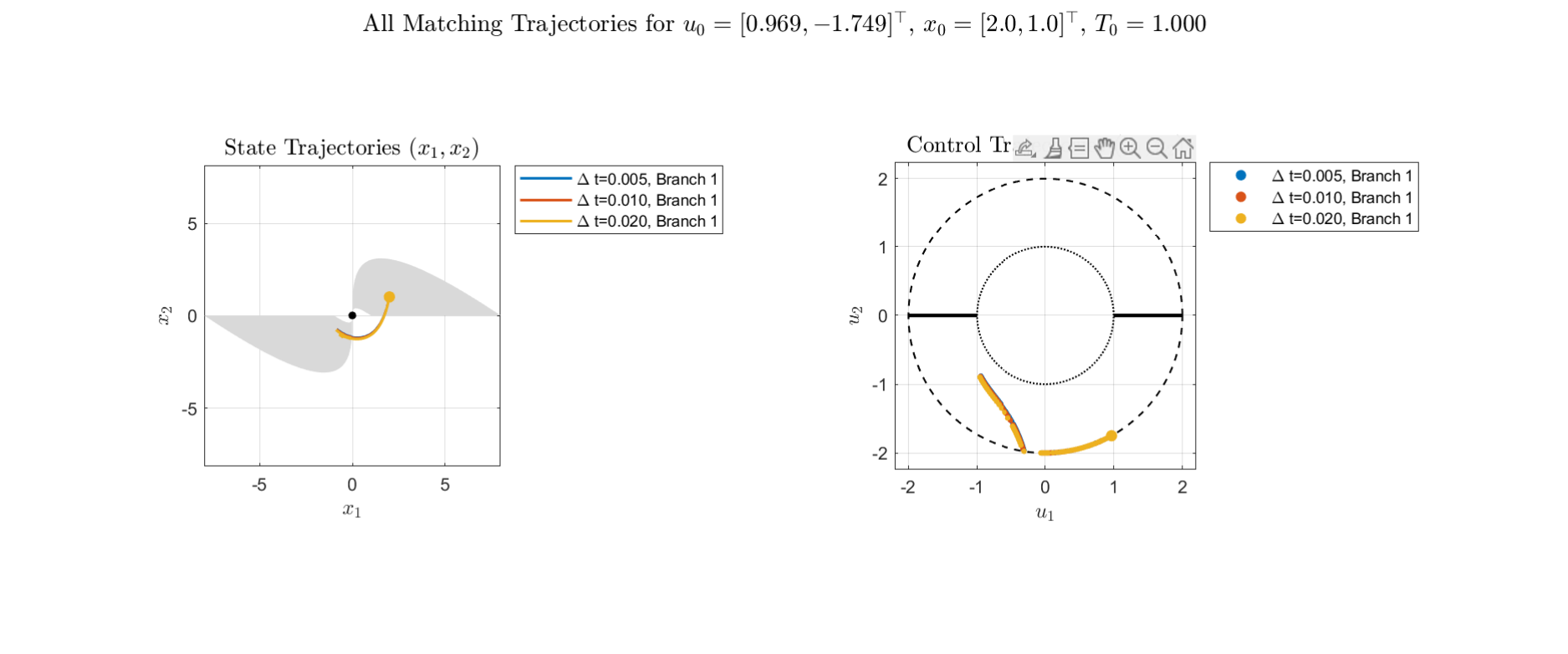}
    \caption{
        Convergence study for the branch initialized at $u_0^{(1)} = (0.969, -1.749)^\top$, comparing trajectories for $\Delta t = 0.02, 0.01, 0.005$.
    }
    \label{fig:conv_branch1}
\end{figure}

\begin{figure}[htbp]
    \centering
    \includegraphics[width=1\textwidth]{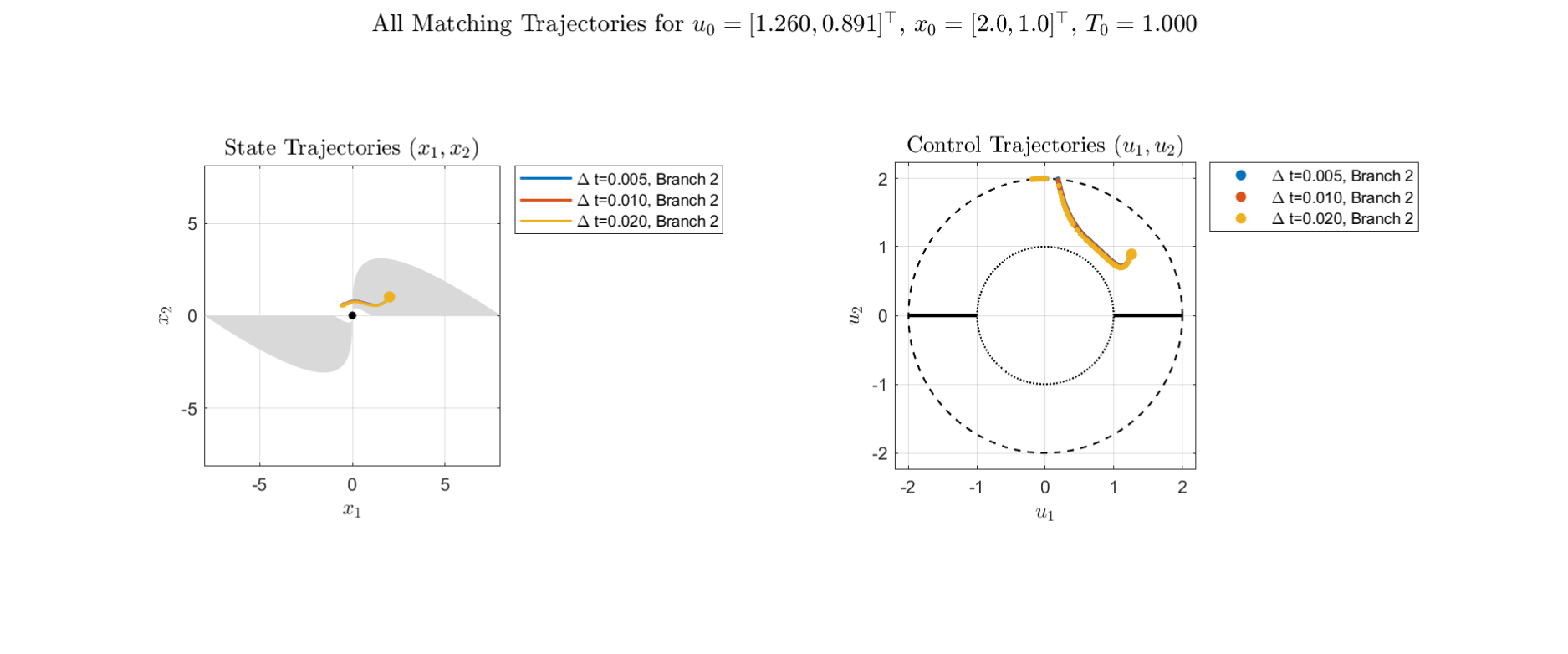}
    \caption{
        Convergence study for the branch initialized at $u_0^{(2)} = (1.260, 0.891)^\top$.
    }
    \label{fig:conv_branch2}
\end{figure}

\begin{figure}[htbp]
    \centering
    \includegraphics[width=1\textwidth]{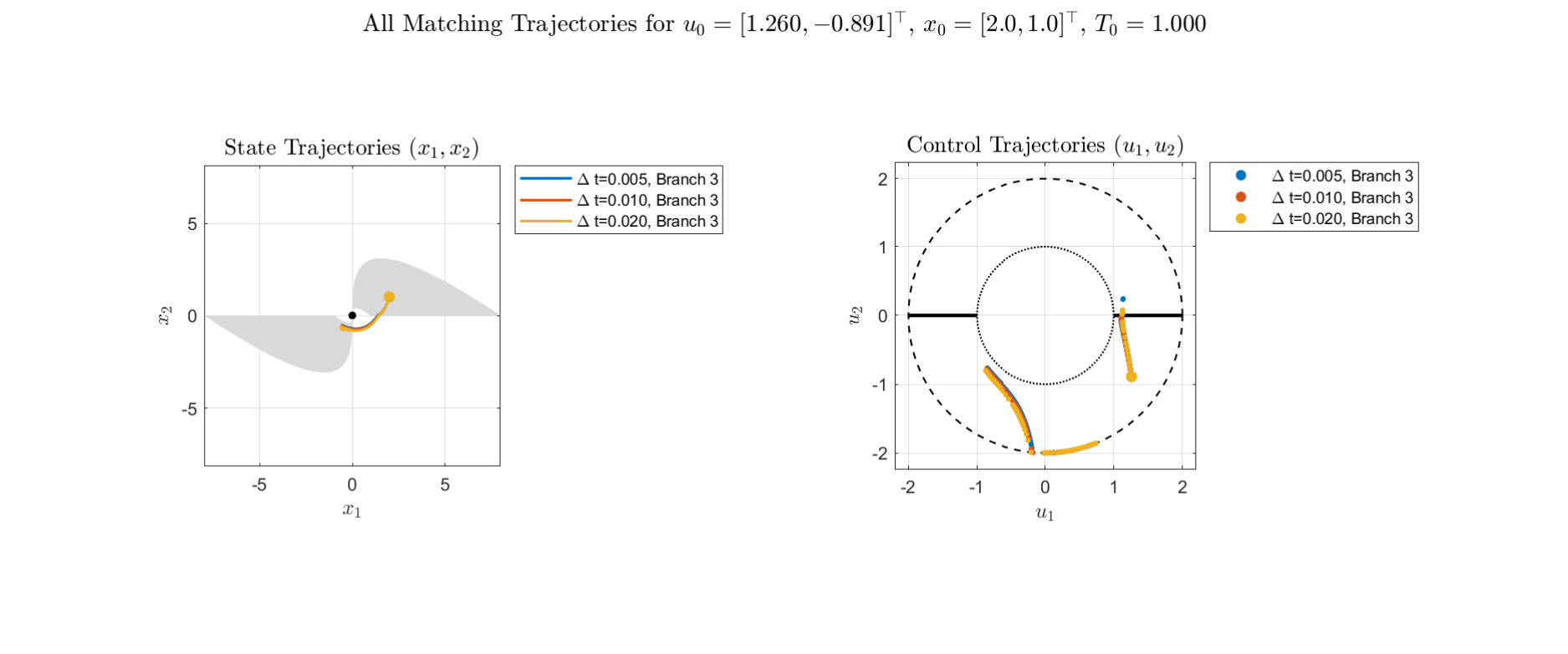}
    \caption{
        Convergence study for the branch initialized at $u_0^{(3)} = (1.260, -0.891)^\top$.
    }
    \label{fig:conv_branch3}
\end{figure}

Figures~\ref{fig:conv_branch1}--\ref{fig:conv_branch3} present convergence studies for all three solution branches, each initialized from a distinct equilibrium $u_0^{(i)} \in \mathcal{S}(x_0)$. For every branch, state and control trajectories are computed with step sizes $\Delta t = 0.02$, $0.01$, and $0.005$. The solutions exhibit clear fourth-order convergence as the mesh is refined, confirming the accuracy and robustness of the \texttt{SOS-PDVIRK4} algorithm across the entire solution manifold.

Moreover, these figures provide direct visual evidence of the structural regularity predicted by Assumptions (B2)–(B4). Specifically, the state trajectories $x(t)$ remain globally continuous, reflecting the smoothness of the underlying Carath\'{e}odory solution. In contrast, the equilibrium controls $u(t)$ exhibit piecewise continuity: they are smooth within each parameter region where the active set of the VI remains constant, but may display kinks or abrupt changes in derivative precisely at the crossing times of the discontinuity set $\Sigma = \partial G(K)$ (e.g., near $t \approx 0.118$, $0.635$, etc.). This behavior aligns with the theoretical characterization of $\mathrm{Graph}(\mathcal{S})$ as a finite union of $C^1$-manifolds, and underscores the algorithm’s ability to faithfully track both the continuous evolution of the state and the piecewise-smooth nature of the equilibrium selection.

\begin{figure}[htbp]
    \centering
    \includegraphics[width=0.75\textwidth]{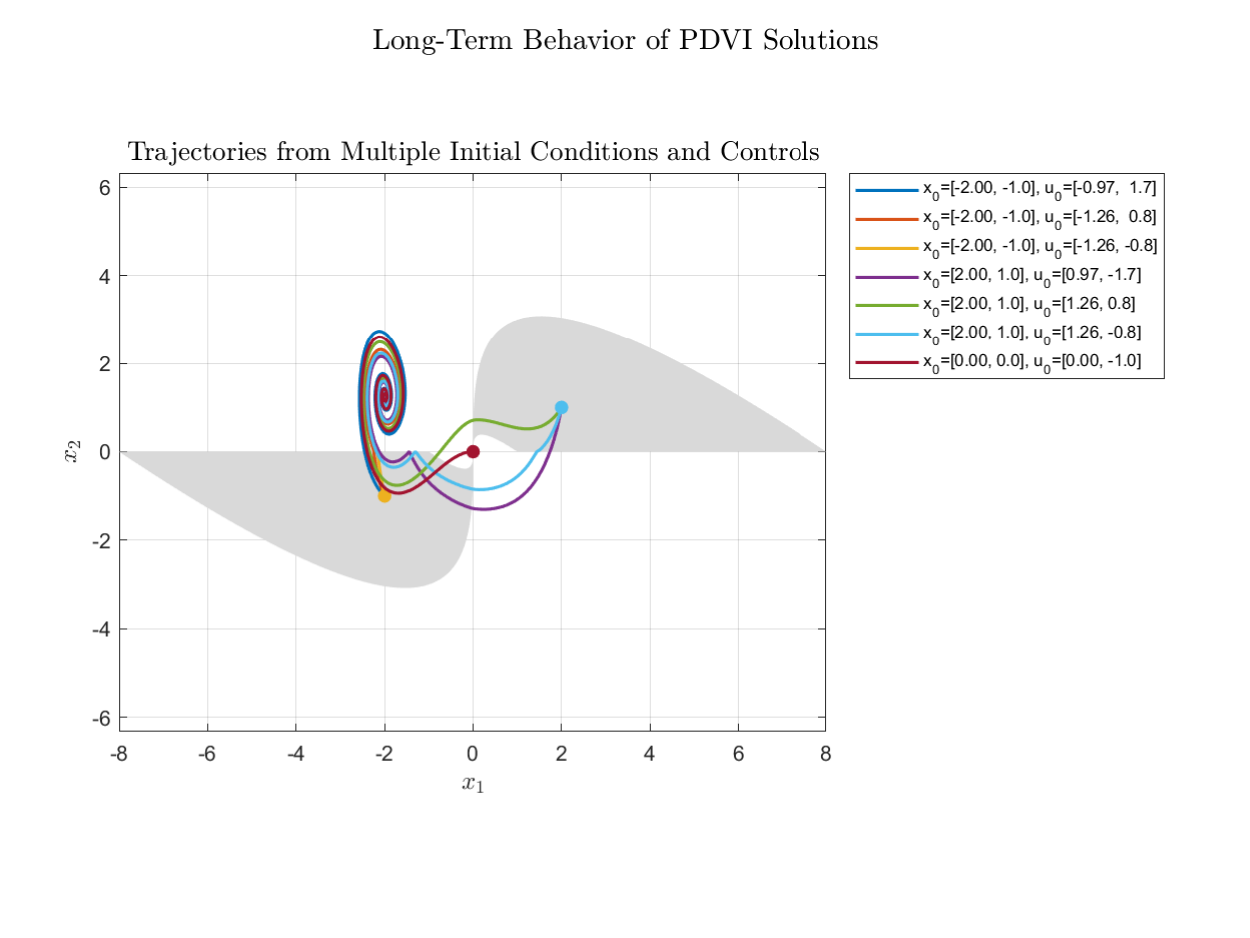}
    \caption[State trajectories for seven initial conditions]{%
        State trajectories $x(t)$ for the PDVI system over $[0, 10]$ with $\Delta t = 0.25$. 
        The seven trajectories correspond to the initial state-control pairs detailed in the main text.
    }
    \label{fig:pdvi_longtime}
\end{figure}

As shown in Figure~\ref{fig:pdvi_longtime}, we simulate the PDVI system from seven initial state-control pairs $(x_0, u_0)$, listed in Table~\ref{tab:initial_conditions}. These include the three equilibria at each of the symmetric states $x_0 = (\pm 2, \pm 1)^\top \in G(K)$, and one admissible control from the origin. Despite this diversity, all trajectories converge as $t \to \infty$ to a common neighborhood near
\[
x^* \approx (-1.9916,\ 1.2409)^\top, \quad u^* \approx (-0.7549,\ 1.8520)^\top,
\]
providing strong numerical evidence for the global attractivity of this equilibrium. 
Crucially, all seven trajectories exhibit only finitely many control discontinuities over the entire horizon $[0,10]$, confirming the global regularity and finite-switching property predicted by Theorem~\ref{thm:regular_solution_existence} and Theorem~\ref{thm:global_existence}.
This confirms the ability of the \texttt{SOS-PDVIRK4} algorithm to robustly capture long-term convergence across the solution manifold.

\begin{table}[ht]
\centering
\caption{Initial state-control pairs used in the long-time simulation of the PDVI system.}
\label{tab:initial_conditions}
\begin{tabular}{c c}
\toprule
$x_0$ & $u_0$ \\
\midrule
$(-2,\ -1)^\top$ & $(-0.97,\ 1.70)^\top$ \\
$(-2,\ -1)^\top$ & $(-1.26,\ 0.80)^\top$ \\
$(-2,\ -1)^\top$ & $(-1.26,\ -0.80)^\top$ \\
$(2,\ 1)^\top$   & $(0.97,\ -1.70)^\top$ \\
$(2,\ 1)^\top$   & $(1.26,\ 0.80)^\top$ \\
$(2,\ 1)^\top$   & $(1.26,\ -0.80)^\top$ \\
$(0,\ 0)^\top$   & $(0.00,\ -1.00)^\top$ \\
\bottomrule
\end{tabular}
\end{table}

In summary, this suite of experiments demonstrates that our framework transcends the role of a numerical solver and functions as a mathematically rigorous analytical instrument. It yields a certified, high-resolution characterization of the full solution manifold of the PDVI explicitly revealing its decomposition into finitely many \textit{regular solution branches}, each distinguished by a well-defined switching sequence $\{t_k\}_{k=1}^{\sigma}$ with finite switching degree $\sigma < \infty$, as guaranteed by Theorem~\ref{thm:regular_solution_existence}. Moreover, the evolution of these branches reflects the underlying semi-algebraic stratification of the parameter domain $\mathcal{D}$, thereby exposing how the algebraic structure of the static VI (e.g., cardinality and continuity of $\mathcal{S}(x)$) governs the instantaneous dynamics.

This geometric insight is empirically validated: trajectories initiated from seven distinct state-control pairs including equilibria at symmetric points and a non-equilibrium configuration exhibit only finitely many control discontinuities over $[0,10]$ (see Figure~\ref{fig:pdvi_longtime}), and all converge asymptotically to the same equilibrium $(x^*, u^*)$. Such uniform long-term behavior, despite initial multistability, confirms the global attractivity predicted under the boundedness condition of Theorem~\ref{thm:global_existence}, and underscores that the solution manifold, while rich in local branching, possesses a coherent global structure.

\section{Conclusion}~\label{sec:conclusion}
This work centers on polynomial differential variational inequalities (PDVIs) a class of nonsmooth dynamical systems in which the instantaneous evolution is governed by a parameter-dependent variational inequality whose data depend polynomially on the state. We begin by analyzing the intrinsic structure of PDVIs, revealing that their solution behavior is dictated by the semi-algebraic stratification of the state-parameter domain $\mathcal{D}$. Within each stratum, the set of equilibria $\mathcal{S}(x)$ exhibits constant cardinality and smooth dependence, while transitions across strata correspond to switching events in the control trajectory.
Building on this geometric insight, we establish that  under the linear independence constraint qualification (LICQ) and mild regularity conditions, Carath\'{e}odory solutions exist for almost every initial condition in $\mathcal{D}$. Moreover, these solutions are \emph{regular} that they admit piecewise continuous controls with only \emph{finitely many switching times} over any compact interval, as formalized in Theorem~\ref{thm:regular_solution_existence}. This finite-switching property ensures that the global solution manifold, though potentially branched, remains combinatorially tractable.

To operationalize these results, we propose \texttt{SOS-PDVIRK4}, a certifiable numerical framework that integrates Nie’s Lagrange multiplier expression (LME) reformulation, the Moment–SOS hierarchy, and a fourth-order Runge–Kutta scheme. At each time step, the algorithm solves a polynomial optimization problem to enumerate all isolated equilibria, thereby capturing transient multiplicity and enabling branch-wise propagation of regular solution trajectories.
We demonstrate the practical relevance of our approach through applications in multi-agent coordination and nonconvex optimal control, where decision-making must account for multiple admissible futures. Numerical experiments confirm two key predictions of our theory: (i) all computed trajectories exhibit only finitely many discontinuities in the control signal, and (ii) despite divergent short-term behaviors, distinct solution branches converge asymptotically to a common equilibrium validating both the \emph{regularity} and \emph{finite-switching} properties of PDVI solutions.

Looking ahead, promising extensions include exploiting sparsity via TSSOS~\cite{Wang2021,Wang2023}, incorporating learning-based heuristics for branch selection, generalizing to stochastic or hybrid PDVI regimes, and developing robust methods for handling boundary-induced degeneracies where LICQ fails. Ultimately, this work advances a vision of \emph{computational stewardship} that by faithfully mapping the landscape of possible evolutions, we empower principled intervention in complex systems shaped by multiplicity, contingency, and choice.

\section*{Acknowledgements}
We thank the anonymous reviewers for insightful comments and the editor for handling
this paper.

\section*{Funding}
This work was supported by the National Natural Science Foundation of China (Grant No.~12301395) and by Natural Science Foundation of Sichuan Province (Grant No.2026NSFSC0236).

\section*{Data Availability} 
Data sharing is not applicable to this article as no datasets were generated or analyzed during the current study.

\section*{Declarations}
\textbf{Conflict of interest} The authors declare that they have no conflict of interest.

\end{document}